\newtheorem{theorem}{Theorem}[section]
\newtheorem{proposition}[theorem]{Proposition}
\newtheorem{example}[theorem]{Example}
\newtheorem{examples}[theorem]{Examples}
\newtheorem{lemma}[theorem]{Lemma}
\newtheorem{remark}[theorem]{Remark}
\newcommand{\dom}{\mathbf{d}}
\newcommand{\ran}{\mathbf{r}}
\title[Bass-Serre theory]{A categorical description of Bass-Serre theory}
\author{M.~V.~Lawson}
\address{Department of Mathematics
and the
Maxwell Institute for Mathematical Sciences,
Heriot-Watt University,
Riccarton,
Edinburgh~EH14~4AS,
UK}
\email{markl@ma.hw.ac.uk}
\author{A.~R.~Wallis}
\address{Department of Mathematics
and the
Maxwell Institute for Mathematical Sciences,
Heriot-Watt University,
Riccarton,
Edinburgh~EH14~4AS,
UK}
\email{arw5@hw.ac.uk}
\begin{document}
\begin{abstract} 
Self-similar group actions may be encoded by a class of left cancellative monoids called left Rees monoids,
a result obtained by combining pioneering work by  Perrot with later work by the first author.
Left Rees monoids that are also right cancellative are called Rees monoids.
Irreducible Rees monoids have the striking property that they are embedded in their universal groups
and these  universal groups are HNN extensions by a single stable letter.
In this paper, we generalize the above theory from monoids to categories.
We study the structure of left Rees categories and prove that Rees categories embed into their universal groups.
Furthermore, we show that from each graph of groups, we may construct a Rees category
and prove that the fundamental group of the former is the universal group of the latter.
In this way, Bass-Serre theory may be viewed as a special case of the general problem of embedding
categories into groupoids.\\

\noindent
{\em 2000 AMS Subject Classification:} 20M10, 20M50.
\end{abstract}

\maketitle

\thanks{The first author's research was partially supported by an EPSRC grant (EP/I033203/1)
and the second by an EPSRC Doctoral Training Account (EP/P504945/1). 
Some of the material in this paper appeared in the second author's PhD thesis \cite{AW}.}

\section{Introduction and terminology}

In this paper, we generalize the theory developed in \cite{Lawson2008a, LW} from monoids to categories.
In particular, we shall show how to construct a  cancellative category from each graph of groups
in such a way that the universal groupoid of the category is the fundamental groupoid of the graph of groups.

Any undefined terms from category theory we use may be found in \cite{Mac}.
For us, categories are small and objects are replaced by identities;
thus we view them as `monoids with many identities'.
The elements of a category $C$ are called {\em arrows} and the set of identities of $C$ is denoted by $C_{o}$.
Each arrow $a$ has a {\em domain}, denoted by $\mathbf{d}(a)$, and a {\em codomain} denoted by $\mathbf{r}(a)$,
both of which are identities and $a = \mathbf{d}(a)a = a\mathbf{r}(a)$.
The product $ab$ exists 
if and only if $\ran (a) = \dom (b)$.\footnote{This is the reverse of the way that 
the first author usually treats category products but is the most natural one in the light of the applications we have in mind.}
Thus our products should be conceived thus
$$ e \stackrel{a}{\longrightarrow} f \stackrel{b}{\longrightarrow} i$$
where $e$, $f$ and $i$ are identities.
We shall sometimes write $\exists ab$ to mean that the product $ab$ exists in the category. 
Given identities $e$ and $f$, the set of arrows $eCf$ is called a {\em hom-set}
and $eCe$ is a monoid called the {\em local monoid at $e$}.
The group of units of the local monoid at $e$ is called the {\em local group at $e$}.
We say that arrows $x$ and $y$ are {\em parallel} if they belong to the same hom-set.
A category is called {\em left cancellative} if whenever $ax = ay$ we have that $x = y$.
We define {\em right cancellative} categories dually.
A {\em cancellative category} is one which is both left and right cancellative.
An arrow $a$ is {\em invertible} or an {\em isomorphism} if there is an arrow $a^{-1}$, called an {\em inverse} and perforce unique, 
such that $aa^{-1} = \mathbf{d}(a)$ and $a^{-1}a = \mathbf{r}(a)$.
An element $a \in C$ of a category is said to be an {\em atom} if it is not invertible and if $a = bc$ then either $b$ or $c$ is invertible.

\begin{lemma}\label{le: atoms_closed_under_groupoid_products} 
If $a$ is an atom in an arbitrary category, $g$ is invertible and $\exists ga$ then $ga$ is an atom.
\end{lemma}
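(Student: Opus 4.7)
The plan is a short direct argument: show $ga$ is not invertible, then show any factorization of $ga$ pulls back to a factorization of $a$, where atomicity can be applied.

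First I would dispose of invertibility. If $ga$ were invertible, then since $g$ is invertible, $a = g^{-1}(ga)$ would be a composite of two invertibles and hence itself invertible, contradicting the definition of an atom. (The composability $g^{-1}(ga)$ is immediate from $\mathbf{r}(g^{-1}) = \mathbf{d}(g) = \mathbf{d}(ga)$.)

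Next, suppose $ga = bc$. I would use $g^{-1}$ to transport this factorization back to $a$. Since $\mathbf{d}(b) = \mathbf{d}(bc) = \mathbf{d}(ga) = \mathbf{d}(g)$, the product $g^{-1}b$ exists, and then $a = g^{-1}(ga) = g^{-1}(bc) = (g^{-1}b)c$. Because $a$ is an atom, one of $g^{-1}b$ or $c$ is invertible. If $c$ is invertible we are done. If $g^{-1}b$ is invertible, then $b = g(g^{-1}b)$ is a product of two invertible arrows and is therefore invertible. In either case, one of $b$, $c$ is invertible, so $ga$ is an atom.

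There is no real obstacle here; the only point of care is bookkeeping the domains and codomains to ensure that each product I write down ($g^{-1}b$, $g(g^{-1}b)$, $g^{-1}(ga)$) actually exists in the category. I would make those composability checks explicit, but otherwise the proof is just two applications of the fact that invertibles are closed under composition.
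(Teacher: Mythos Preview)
Your proof is correct and follows essentially the same approach as the paper: factor $ga = bc$, pull back via $g^{-1}$ to $a = (g^{-1}b)c$, and apply the atomicity of $a$. You are in fact slightly more careful than the paper, which omits the explicit check that $ga$ is not invertible and the domain/codomain bookkeeping; otherwise the arguments coincide.
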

\begin{proof} Suppose that $ga = bc$.
Then $a = (g^{-1}b)c$.
Thus $g^{-1}b$ or $c$ is invertible;
that is, $b$ or $c$ is invertible.
It follows that $ga$ is also an atom.
\end{proof}

A category in which every arrow is invertible is called a {\em groupoid}.
If a groupoid is just a disjoint union of its local groups then we say that it is {\em totally disconnected}.
The set of invertible elements of a category forms a groupoid with the same set of identities.
Later, we shall deal only with categories $C$ having the following property: any isomorphism belongs to a local monoid.
We shall say that such categories are {\em skeletal}.   
The following is well-known.

\begin{lemma} 
Every category is equivalent to a skeletal category.
\end{lemma}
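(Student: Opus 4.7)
The plan is to cut $C$ down to a full subcategory whose identities form a transversal of the isomorphism classes on $C_{o}$, and then to verify that this subcategory is skeletal and equivalent to $C$.

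First I would define an equivalence relation on $C_{o}$ by setting $e \sim f$ iff the hom-set $eCf$ contains an invertible arrow. Using the axiom of choice, I would pick a representative $e^{*}$ from each $\sim$-class, and for each $e \in C_{o}$ choose an invertible arrow $\phi_{e}$ from $e$ to $e^{*}$ (taking $\phi_{e^{*}} = e^{*}$). Let $S$ be the full subcategory of $C$ whose identities are the chosen representatives. Skeletality of $S$ is then immediate: if $a \in S$ is invertible with $\dom(a) = e^{*}$ and $\ran(a) = f^{*}$, then $a$ witnesses $e^{*} \sim f^{*}$ in $C$, so by the choice of transversal $e^{*} = f^{*}$, and $a$ lies in the local monoid of $S$ at $e^{*}$.

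Next I would exhibit the equivalence. Take $F \colon S \hookrightarrow C$ to be the inclusion, and define $G \colon C \to S$ by $G(e) = e^{*}$ on identities and
\[ G(a) = \phi_{\dom(a)}^{-1}\, a\, \phi_{\ran(a)} \]
on arrows. Functoriality of $G$ uses only that $\phi_{f}\phi_{f}^{-1} = f$: whenever $\exists ab$ with $\dom(a) = e$, $\ran(a) = f = \dom(b)$, $\ran(b) = g$, one has
\[ G(a)G(b) = \phi_{e}^{-1} a \phi_{f} \phi_{f}^{-1} b \phi_{g} = \phi_{e}^{-1}(ab)\phi_{g} = G(ab). \]
Since $\phi_{e^{*}} = e^{*}$, the composite $G \circ F$ equals $\mathrm{id}_{S}$, while the family $(\phi_{e})_{e \in C_{o}}$ assembles into a natural isomorphism $\mathrm{id}_{C} \Rightarrow F \circ G$: naturality at $a \in eCf$ is the equation $a\phi_{f} = \phi_{e}\,G(a)$, which is just the definition of $G(a)$ rearranged.

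I expect no real obstacle beyond careful bookkeeping of domains and codomains under the paper's left-to-right composition convention; the only nontrivial ingredient is the axiom of choice, invoked both in selecting the transversal $\{e^{*}\}$ and in choosing the isomorphisms $\phi_{e}$.
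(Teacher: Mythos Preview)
Your proposal is correct and follows essentially the same approach as the paper: choose a transversal of the isomorphism classes of identities and pass to the full subcategory it determines. The paper's proof is considerably terser, merely asserting that the resulting full subcategory is skeletal and equivalent to $C$, whereas you spell out the functor $G$ and the natural isomorphism explicitly; but the underlying idea is identical.
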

\begin{proof} Let $C$ be a category.
Let $\{ e_{i} \colon i \in I \}$ be a transversal of the isomorphism classes of the identities.
Thus if $i \neq j$ then $e_{i}$ is not isomorphic to $e_{j}$ and every identity in $C$ is isomorphic to some $e_{i}$.
Let $C'$ be the full subcategory determined by the $e_{i}$.
Then $C'$ is a skeletal category equivalent to $C$.
\end{proof}

Given any directed graph $D$, we may construct the {\em free category on $D$} denoted by $D^{\ast}$.
This has one identity for each vertex and consists of all finite directed paths in the graph.
Multiplication is concatenation of paths.
The elements of $D^{\ast}$ will be written $a_{1} \cdot \ldots \cdot a_{m}$ where the $a_{i}$ are edges of the graph
such that $\stackrel{a_{1}}{\rightarrow} \stackrel{a_{2}}{\rightarrow} \ldots \stackrel{a_{m}}{\rightarrow}$.

With each category $C$, we may associate its {\em universal } or {\em fundamental groupoid} $U(C)$ \cite{MP}.
To construct this groupoid, first regard $C$ as a directed graph.
For each edge $e \stackrel{a}{\rightarrow} f$ attach a new edge $f \stackrel{a^{-1}}{\rightarrow} e$.
Form the free category $(C \cup C^{-1})^{\ast}$ on the set $C \cup C^{-1}$.
We denote elements of this category by $a_{1} \cdot \ldots \cdot a_{n}$ where $a_{i} \in C \cup C^{-1}$
and $\mathbf{r}(a_{i}) = \mathbf{d}(a_{i+1})$ for $i = 1,2,\ldots, n-1$.
Now define a congruence $\equiv$ on this free category generated by
$$a \cdot a^{-1} \equiv \mathbf{d}(a),
\quad
a^{-1} \cdot a \equiv \mathbf{r}(a),
\quad
a \cdot b \equiv ab \mbox{\rm \, if } \exists ab
$$  
where $a,b \in C$.
The first two identifications ensure that the quotient category is a groupoid and the third ensures that there
will be a functor from $C$ to the quotient.
This gives us our groupoid $U(C)$ with an associated functor $\iota \colon C \rightarrow U(G)$.
We prove that this has the correct universal property.
Let $\theta \colon C \rightarrow G$ be a functor to a groupoid.
We may extend $\theta$ to a function $\bar{\theta} \colon C \cup C^{-1} \rightarrow G$ 
where we define $\bar{\theta}(a) = \theta (a)$ and $\bar{\theta} (a^{-1}) = \theta (a)^{-1}$.
We may therefore extend $\bar{\theta}$ to a functor $\theta'$ from the free category $(C \cup C^{-1})^{\ast}$ to $G$.
Observe that if $x \equiv y$ then $\bar{\theta} (x) \equiv \bar{\theta} (y)$.
Thus we may define a functor $\Theta$ from $U(C)$ to $G$.
The uniqueness property comes from the fact that $U(C)$ is generated by $C$.

\section{Levi categories}

In this section, we define a class of categories that contains the left Rees categories
which we are primarily interested in.

The following definition is generalized from semigroup theory \cite{MS}.
A category $C$ is said to be {\em equidivisible} if for every commutative square
$$\spreaddiagramrows{2pc}
\spreaddiagramcolumns{2pc}
\diagram
& \dto_{c} \rto^{a}
& \dto^{b}
\\
& \rto_{d}
&
\enddiagram$$
we either have an arrow $u$ making the following diagram commute
$$\spreaddiagramrows{2pc}
\spreaddiagramcolumns{2pc}
\diagram
& \dto_{c} \rto^{a}
& \dto^{b}
\\
& \rto_{d} \urto^{u}
&
\enddiagram$$
or one, $v$, making the following diagram commute
$$\spreaddiagramrows{2pc}
\spreaddiagramcolumns{2pc}
\diagram
& \dto_{c} \rto^{a}
& \dto^{b} \dlto_{v}
\\
& \rto_{d}
&
\enddiagram$$

A {\em length functor} is a functor $\lambda \colon C \rightarrow \mathbb{N}$ from a category $C$ to the additive monoid of natural numbers
satisfying the following conditions:
\begin{description}

\item[{\rm (LF1)}] If $xy$ is defined then $\lambda (xy) = \lambda (x) + \lambda (y)$.

\item[{\rm (LF2)}] $\lambda^{-1}(0)$ consists of all and only the invertible elements of $C$.

\item[{\rm (LF3)}] $\lambda^{-1}(1)$ consists of all and only the atoms of $C$.

\end{description}

Sometimes we shall write $\lambda_{C}$ if we wish to emphasize the fact that the length functor belongs to $C$.

A {\em Levi category} is an equidivisible category equipped with a length functor that contains at least one noninvertible element (to rule out groupoids).
These generalize the Levi monoids first introduced in \cite{LW}.

A {\em principal right ideal} in a category $C$ is a subset of the form $aC$ where $a \in C$.
We may similarly define {\em principal left ideals} and {\em principal ideals}. 
Consequently, Greens relations $\mathscr{L}$, $\mathscr{R}$, $\mathscr{H}$, $\mathscr{D}$,  and $\mathscr{J}$ can also be defined in categories.
Thus, for example, in the category $C$ we define $a \, \mathscr{L} \, b$ if and only if $Ca = Cb$.
Let $aC \subseteq bC$. 
We use the notation $[aC,bC]$ to mean the set of principal right ideals $xC$ such that $aC \subseteq xC \subseteq bC$.
The proof of the following is straightforward.

\begin{lemma}\label{le: greensrelations} Let $C$ be a category equipped with a length functor having $G$ as its groupoid of invertible elements.
\begin{enumerate}

\item $a \, \mathscr{L} \, b \Leftrightarrow Ga = Gb$. In particular, $\mathbf{r} (a) = \mathbf{r} (b)$.

\item $a \, \mathscr{R} \, b \Leftrightarrow aG = bG$. In particular, $\mathbf{d} (a) = \mathbf{d} (b)$.

\item $a \, \mathscr{J} \, b \Leftrightarrow GaG = GbG$. 

\end{enumerate}
\end{lemma}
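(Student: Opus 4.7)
The overall plan is to exploit a single consequence of the length functor: whenever an equation expresses $a$ in terms of itself with extra multiplicative factors, those factors must all have length zero by (LF1), and hence are invertible by (LF2). All three parts then reduce to routine substitutions using this observation.

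For (1), suppose $a\,\mathscr{L}\,b$, i.e.\ $Ca = Cb$. Since $a = \mathbf{d}(a)a \in Ca = Cb$, we may write $a = xb$ with $x \in C$, and symmetrically $b = ya$ with $y \in C$. Substituting gives $a = xya$, so (LF1) forces $\lambda(x) + \lambda(y) = 0$ in $\mathbb{N}$, whence $\lambda(x) = \lambda(y) = 0$, and by (LF2) both $x$ and $y$ lie in $G$. Thus $a \in Gb$ and $b \in Ga$, so $Ga = Gb$; the ``in particular'' is then just $\mathbf{r}(a) = \mathbf{r}(xb) = \mathbf{r}(b)$. Conversely, if $Ga = Gb$ then $a = gb$ for some $g \in G$, whence $Ca = C(gb) \subseteq Cb$, and by symmetry $Ca = Cb$. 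Part (2) is the exact dual: replace left multiplication by right multiplication throughout, starting from $a = a\mathbf{r}(a) \in aC = bC$.

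For (3), if $a\,\mathscr{J}\,b$ then $CaC = CbC$, so $a = xby$ and $b = uav$ with $x,y,u,v \in C$. Substituting yields $a = (xu)\,a\,(vy)$, and (LF1) now forces $\lambda(x) = \lambda(u) = \lambda(v) = \lambda(y) = 0$; so all four factors lie in $G$ by (LF2), proving $a \in GbG$, and symmetrically $b \in GaG$. The converse is the routine substitution: from $a = gbh$ with $g,h \in G$, one has $CaC = C(gbh)C \subseteq CbC$, and symmetrically.

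I do not expect any real obstacle. The only bookkeeping point worth mentioning is checking that the products implicit in $Ca \subseteq Cb$, $aC \subseteq bC$, and $CaC \subseteq CbC$ are actually composable in $C$; this is immediate from the equations $a = gb$, $a = bg$, $a = gbh$ (they match up the relevant domains and codomains with those of $a$). The whole argument is thus a short application of the length-functor trick isolated in the first paragraph.
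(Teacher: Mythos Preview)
Your argument is correct and is exactly the ``straightforward'' verification the paper has in mind (the paper omits the proof entirely, stating only that it is straightforward). The key step---using (LF1) on an equation of the form $a = (xu)a(vy)$ to force all extra factors to have length zero, hence lie in $G$ by (LF2)---is the intended one, and your bookkeeping on composability is fine.
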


Principal right ideals of the form $eC$ where $e$ is an identity are maximal such ideals
because if $eC \subseteq aC$ then $e = \dom (a)$ and $aC = eaC \subseteq eC$ giving $eC = aC$.
A principal right ideal $aC$ is said to be {\em submaximal} if $aC \neq \dom (a)C$ 
and there are no proper principal right ideals between $aC$ and $\dom(a)C$.

\begin{lemma}\label{le: atomic_decomposition} Let $C$ be a Levi category.
\begin{enumerate}

\item The element $a$ is an atom if and only if $aC$ is submaximal if and only if $Ca$ is submaximal.

\item Each non-invertible element $a$ of $C$ can be written $a = a'b$ where $a'$ is an atom.

\item Every non-invertible element of $C$ can be written as a product of atoms.

\end{enumerate}
\end{lemma}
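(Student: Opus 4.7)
The plan is to derive all three parts directly from the length functor axioms (LF1--LF3) and the definition of atom; equidivisibility is not needed for this particular lemma.

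For part (1), I will prove that $a$ is an atom iff $aC$ is submaximal; the symmetric statement for $Ca$ follows by a dual argument. In the forward direction, if $a$ is an atom then $\lambda(a)\geq 1$ by LF2, and LF1 rules out $aC = \dom(a)C$ (since $\dom(a) = au$ would force $\lambda(a) + \lambda(u) = 0$). If some $xC$ sat strictly between $aC$ and $\dom(a)C$, we would have $a = xy$ with $\dom(x) = \dom(a)$, and the atom property would force $x$ or $y$ invertible; each case collapses $xC$ to an endpoint, using $\dom(x) = xx^{-1} \in xC$ on the one side and $x = ay^{-1} \in aC$ on the other. Conversely, submaximality precludes $a$ being invertible (since then $aC = \dom(a)C$). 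Any factorization $a = bc$ sandwiches $bC$ between $aC$ and $\dom(a)C$, so one of the inclusions is an equality; LF1 then pins one of the factors to length zero, making it invertible.

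Part (2) proceeds by induction on $\lambda(a) \geq 1$. The base $\lambda(a) = 1$ is immediate: $a$ is itself an atom by LF3 and $a = a \cdot \ran(a)$ serves. For $\lambda(a) \geq 2$, $a$ is neither invertible (LF2) nor an atom (LF3), hence admits a factorization $a = bc$ with both factors non-invertible and of strictly smaller length; the induction hypothesis applied to $b$ gives an atom-headed factorization of $b$ which extends to $a$. Part (3) is then a further induction on $\lambda(a)$: the case $\lambda(a) = 1$ is trivial since $a$ is itself an atom, and for $\lambda(a) \geq 2$ part (2) supplies $a = a'b$ with $a'$ an atom and $\lambda(b) = \lambda(a) - 1 \geq 1$, so $b$ is non-invertible and the induction hypothesis expresses it as a product of atoms.

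The only subtle point is the converse of (1): since Levi categories need not be cancellative, the usual cancellation-style argument fails and must be replaced by length accounting via LF1. Recognising that LF1 plays the role of cancellation here is the crux of the proof; once in hand, the remainder is bookkeeping.
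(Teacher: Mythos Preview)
Your proof is correct and follows essentially the same approach as the paper: both arguments rely purely on the length functor axioms (not equidivisibility), proving (1) by sandwiching $bC$ between $aC$ and $\dom(a)C$ and using lengths to force a factor to be invertible, and (2)--(3) by descent/induction on $\lambda$. Your treatment of the converse of (1) is, if anything, a bit more explicit than the paper's about how LF1 substitutes for cancellation when extracting $\lambda(c)=0$ from $aC=bC$ and $\lambda(b)=0$ from $bC=\dom(b)C$.
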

\begin{proof} (1). Suppose that $a$ is an atom and that $aC \subseteq bC$.
Then $a = bc$.
But $a$ is an atom and so either $b$ is invertible or $c$ is invertible.
Suppose that $c$ is invertible then $aC = bC$.
Suppose that $b$ is invertible then $bC = \dom (b) C$.
We have proved that $aC$ is submaximal.

Conversely, suppose that $aC$ is submaximal.
Let $a = bc$.
Then $aC \subseteq bC$.
It follows that either $aC = bC$ or $bC = \dom (b)C$.
If the latter occurs then $b$ is invertible.
If the former occurs then $\lambda (a) = \lambda (b)$.
It follows that $\lambda (c) = 0$ and so $c$ is invertible.

(2). Let $a$ be a non-invertible element.
If $a$ is an atom then we are done.
If not, then $a = a_{1}b_{1}$ for some $b_{1}$ and $a_{1}$ where neither $a_{1}$ nor $b_{1}$ are invertible.
Observe that $\lambda (a_{1}) < \lambda (a)$.
If $a_{1}$ is an atom then we are done, otherwise we may write 
$a_{1} = a_{2}b_{2}$ again where neither $a_{2}$ and $b_{2}$ are atoms.
Observe that $\lambda (a_{2}) < \lambda (a_{1})$.
This process can only continue in a finite number of steps and will end with
$a = a_{n}b'$ for some $b'$ where $a_{n}$ is an atom.

(3). If $a$ is an atom then there is nothing to prove.
Otherwise by (2), we may write $a = a_{1}b_{1}$ where $a_{1}$ is an atom and $\lambda (b_{1}) = \lambda (a) - 1$.
If $b_{1}$ were invertible then $a$ would have been an atom.
If $b_{1}$ is an atom then we are done.
Otherwise, we may repeat the above procedure with $b_{1}$.
\end{proof}

In categories equipped with length functors, we can write every element in terms of atoms and invertible elements.
The obvious next question is what kind of uniqueness we can expect.
Under the additional assumption of equidivisibility, we can retrieve a kind of uniqueness.

\begin{lemma}\label{le: uniqueness} Let $C$ be a Levi category
and suppose that 
$$x = a_{1} \ldots a_{m} = b_{1} \ldots b_{n}$$ 
where the $a_{i}$ and $b_{j}$ are atoms.

\begin{enumerate}

\item $m = n$.

\item There are invertible elements $g_{1}, \ldots, g_{n-1}$ such that
$$a_{1} = b_{1}g_{1}, \quad
a_{2} = g_{1}^{-1}b_{2}g_{2}, \quad
\ldots
\quad
a_{n} = g_{n-1}^{-1}b_{n}.$$ 
This data is best presented by means of the following {\em interleaving diagram}.

$$\spreaddiagramrows{1pc}
\spreaddiagramcolumns{1pc}
\diagram
& 
& \rto^{a_{2}} 
& \rto^{a_{3}}
&
& \ldots
& \rto^{a_{n-1}}
& \drto^{a_{n}}
&
\\
& \urto^{a_{1}} \drto_{b_{1}}  
& 
&
&
&
&
&
&
\\
& 
& \uuto^{g_{1}} \rto_{b_{2}} 
& \uuto^{g_{2}} \rto_{b_{3}}
& \uuto^{g_{3}} 
& \ldots
& \uuto^{g_{n-2}} \rto_{b_{n-1}}
& \uuto^{g_{n-1}} \urto_{b_{n}}
&
\enddiagram$$

\item If $C$ is in addition skeletal then $a_{i}$ is parallel to $b_{i}$ for $i = 1,\ldots, m$.

\end{enumerate}
\end{lemma}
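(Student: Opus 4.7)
The plan is to prove (1) by a one-line appeal to the length functor, (2) by induction on $n$ using equidivisibility as the central tool, and (3) by tracking domains and codomains through the relations provided by (2), using the skeletal hypothesis to collapse invertibles to endomorphisms.

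For part (1), since $\lambda$ is a functor and each atom has length $1$ by (LF3), the two factorizations give $\lambda(x) = m = n$.

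For part (2), I would induct on $n$. The base case $n=1$ is trivial (the list of $g_i$'s is empty, and $a_1 = b_1$). For the inductive step, consider the commutative square
$$a_1 \cdot (a_2 \ldots a_n) = b_1 \cdot (b_2 \ldots b_n).$$
By equidivisibility, either there is an arrow $u$ with $a_1 = b_1 u$ and $u (a_2 \ldots a_n) = b_2 \ldots b_n$, or there is an arrow $v$ with $b_1 = a_1 v$ and $a_2 \ldots a_n = v (b_2 \ldots b_n)$. In the first case, $a_1 = b_1 u$ is an atom factored through the atom $b_1$, so by the atomic property $u$ is invertible; set $g_1 = u$. Then $a_2 \ldots a_n = g_1^{-1}(b_2 \ldots b_n) = (g_1^{-1} b_2) b_3 \ldots b_n$, and by Lemma~\ref{le: atoms_closed_under_groupoid_products} the first factor $g_1^{-1} b_2$ is again an atom. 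Applying the inductive hypothesis to this equality of two length-$(n-1)$ atomic factorizations produces invertibles $g_2, \ldots, g_{n-1}$ with $a_2 = (g_1^{-1} b_2) g_2$, $a_i = g_{i-1}^{-1} b_i g_i$ for $2 \le i \le n-1$, and $a_n = g_{n-1}^{-1} b_n$, as required. The second case of equidivisibility is symmetric: $v$ is invertible because $b_1$ is an atom, and we put $g_1 = v^{-1}$, then proceed identically. The main subtlety here is just recognising that equidivisibility feeds directly into the recursion once one knows, via Lemma~\ref{le: atoms_closed_under_groupoid_products}, that multiplying an atom on either side by an invertible produces an atom.

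For part (3), assume $C$ is skeletal, so that every invertible $g$ satisfies $\dom(g) = \ran(g)$. From $a_1 = b_1 g_1$ we immediately read $\dom(a_1) = \dom(b_1)$, and $\ran(a_1) = \ran(g_1) = \dom(g_1) = \ran(b_1)$, so $a_1$ and $b_1$ are parallel. For $1 < i < n$, the relation $a_i = g_{i-1}^{-1} b_i g_i$ gives $\dom(a_i) = \dom(g_{i-1}^{-1}) = \ran(g_{i-1})$ and $\dom(b_i) = \ran(g_{i-1}^{-1}) = \dom(g_{i-1})$, which coincide by skeletality; similarly $\ran(a_i) = \ran(g_i) = \dom(g_i) = \ran(b_i)$. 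For $i = n$, the relation $a_n = g_{n-1}^{-1} b_n$ is directly parallel. Hence $a_i$ and $b_i$ lie in the same hom-set for every $i$.

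The only potential obstacle I foresee is keeping the domain/range bookkeeping straight in part (2) when applying equidivisibility — one must verify that each $g_i$ is indeed composable on both sides as claimed, but this is forced by the fact that the equalities $a_1 = b_1 g_1$, etc., are equalities of actual arrows in $C$. The skeletal hypothesis plays no role in parts (1) and (2), and only enters in part (3) precisely to identify $\dom(g_i)$ with $\ran(g_i)$.
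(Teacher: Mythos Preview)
Your proof is correct and follows essentially the same line as the paper's: part (1) via the length functor, part (2) by peeling off the leftmost atom using equidivisibility and recognising the connecting arrow as invertible (the paper invokes the length functor here rather than the atom definition, but this is cosmetic), and part (3) from skeletality. Your presentation is slightly more formal --- induction rather than iteration, an explicit appeal to Lemma~\ref{le: atoms_closed_under_groupoid_products}, and spelled-out domain/codomain checks in (3) --- but the argument is the same.
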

\begin{proof} (1). This is immediate from the properties of length functors.

(2). We bracket as follows
$$a_{1}(a_{2}  \ldots a_{m}) = b_{1}(b_{1} \ldots b_{m}).$$
By equidivisibility, 
$a_{1} = b_{1}u$ and $b_{2} \ldots b_{m} = u a_{2} \ldots a_{m}$ for some $u$ 
or 
$b_{1} = a_{1}v$ and $a_{2} \ldots a_{m} = vb_{2} \ldots b_{m}$ for some $v$.
In either case, $u$ and $v$ are invertible since both $a_{1}$ and $b_{1}$ are atoms using the length function.
Thus 
$a_{1} = b_{1}g_{1}$, where $v = g_{1}$ is an isomorphism,
and 
$b_{2} \ldots b_{m} = g_{1}a_{2} \ldots a_{m}$.

We now repeat this procedure bracketing thus
$$b_{2}(b_{3} \ldots b_{m}) = g_{1}a_{2}(a_{3} \ldots a_{m}).$$
By the same argument as above, we get that
$g_{1}a_{2} = b_{2}g_{2}$ for some isomorphism $g_{2}$
and 
$b_{3} \ldots b_{m} = g_{2}a_{3} \ldots a_{m}$.

The process continues and we obtain the result.

(3). The result is immediate from the assumption that the category is also skeletal.
\end{proof}

We shall be interested in {\em left cancellative} Levi categories and ultimately those that are also skeletal.
But we shall start with a slightly different definition and show that it is equivalent to this one.
A category $C$ is said to be {\em right rigid} if $aC \cap bC \neq \emptyset$ implies that $aC \subseteq bC$ or $bC \subseteq aC$.
A {\em left Rees category} is a left cancellative, right rigid category in which each principal right ideal is properly contained 
in only finitely many distinct principal right ideals.

\begin{example} {\em Free categories are left Rees categories: 
the atoms are the edges; 
the length functor simply counts the number of edges in a path;
the groupoid of invertible elements is trivial.}
\end{example}

\begin{lemma} Let $C$ be a left cancellative category with groupoid of invertible elements $G$.
\begin{enumerate}

\item  $a \, \mathscr{R} \, b$ if and only if $aG = bG$.

\item $C$ is a right rigid category if and only if it  is equidivisible.

\item If $e = xy$ is an identity then $x$ is invertible with inverse $y$.

\end{enumerate}
\end{lemma}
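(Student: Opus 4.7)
For part (1), the forward implication is nearly tautological: given $aG=bG$, the element $a=a\mathbf{r}(a)$ lies in $aG=bG\subseteq bC$, so $aC\subseteq bC$, and symmetrically. The interesting direction is the converse. Assuming $aC=bC$, I would write $a=bc$ and $b=ad$ for some $c,d\in C$, substitute to obtain $a=adc$ and $b=bcd$, and then left-cancel against $a=a\mathbf{r}(a)$ and $b=b\mathbf{r}(b)$ to conclude $dc=\mathbf{r}(a)$ and $cd=\mathbf{r}(b)$. This says $c$ and $d$ are mutually inverse, hence both lie in $G$, so $a=bc\in bG$ and symmetrically $b\in aG$.

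For part (2), both implications are essentially direct translations of the definitions. Assuming equidivisibility, suppose $aC\cap bC\neq\emptyset$ and pick $ax=by$ in the intersection; equidivisibility applied to this commutative square yields either $a=bu$ (forcing $aC\subseteq bC$) or $b=av$ (forcing $bC\subseteq aC$). Conversely, assume right rigidity and suppose $ab=cd$. Then $ab\in aC\cap cC$, so right rigidity forces $aC\subseteq cC$ or $cC\subseteq aC$. In the first case, write $a=cu$ for some $u$, substitute into $ab=cd$ to obtain $cub=cd$, and left-cancel to extract $ub=d$; this is exactly the first form of equidivisibility, and the other case is symmetric.

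For part (3), the computation is very short. From $xy=e$ I read off $\mathbf{d}(x)=\mathbf{d}(e)=e$ and $\mathbf{r}(y)=e$; then $x=ex=(xy)x=x(yx)$, and left cancellation gives $yx=\mathbf{r}(x)$. Combined with $xy=e=\mathbf{d}(x)$, this exhibits $y$ as a two-sided inverse of $x$, so $x\in G$ with $x^{-1}=y$.

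None of the three parts presents a genuine obstacle; each reduces to well-placed applications of left cancellation. The only point requiring care is in part (2), where one must correctly track the domains and codomains of the diagonal arrows $u$ and $v$ in the equidivisibility diagrams so as to match them with the inclusions $a\in bC$ or $b\in aC$ given by right rigidity.
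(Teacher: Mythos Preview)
Your proof is correct and follows essentially the same approach as the paper's. The only minor difference is that the paper dismisses the ``equidivisible $\Rightarrow$ right rigid'' direction of (2) and the ``$aG=bG \Rightarrow a\,\mathscr{R}\,b$'' direction of (1) as not needing proof (since neither requires left cancellation), whereas you spell both out; your arguments for the substantive directions---writing $a=bc$, $b=ad$ and left-cancelling in (1), writing $a=cu$ and cancelling $c$ from $cub=cd$ in (2), and cancelling $x$ from $x(yx)=x$ in (3)---are identical to the paper's.
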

\begin{proof} (1). Only one direction needs proving.
Suppose that  $a \, \mathscr{R} \, b$.
Then $a = bx$ and $b = ay$ for some $x,y \in C$.
Thus $a = ayx$ and $b = bxy$.
By left cancellation both $xy$ and $yx$ are identities and so $x$ is invertible with inverse $y$.

(2). Only one direction needs proving. Suppose that $ab = cd$.
Then $aC \cap cC \neq \emptyset$.
Without loss of generality, we may suppose by right rigidity that $aC \subseteq cC$.
Thus $a = cu$
But then $cub = cd$ and so $ub = d$, as required. 

(3). We have that $xyx = x$ and by left cancellation this shows that $yx$ is an identity and so $x$
is invertible with inverse $y$.
\end{proof}

In the light of the lemma above, the following says that
left cancellative equidivisible categories equipped with length functors are left Rees categories.

\begin{lemma} Let $C$ be a right rigid, left cancellative category equipped with a length functor $\lambda$.
Then $C$ is a left Rees category.
\end{lemma}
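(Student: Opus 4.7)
The plan is to verify the only missing part of the definition of a left Rees category, namely the finiteness condition: for every $a \in C$, there are only finitely many distinct principal right ideals $xC$ with $aC \subseteq xC$.

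First I would show that the collection of principal right ideals containing $aC$ is totally ordered by inclusion. This is immediate from right rigidity: if $aC \subseteq xC$ and $aC \subseteq yC$, then $a \in xC \cap yC$, so $xC \cap yC \neq \emptyset$, and hence $xC \subseteq yC$ or $yC \subseteq xC$.

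Next I would bound the length of strictly ascending chains above $aC$. Suppose $aC = b_{0}C \subsetneq b_{1}C \subsetneq \ldots \subsetneq b_{k}C$. Since $b_{i} \in b_{i+1}C$, we may write $b_{i} = b_{i+1}c_{i}$ for some $c_{i} \in C$, giving $\lambda (b_{i}) = \lambda (b_{i+1}) + \lambda (c_{i})$ by (LF1). If $\lambda (c_{i}) = 0$, then $c_{i}$ is invertible by (LF2), so $b_{i+1}C = b_{i+1}c_{i}C = b_{i}C$ (containment one way is automatic; the other uses $b_{i+1} = b_{i}c_{i}^{-1}$ together with $c_{i}^{-1} \in \mathbf{r}(c_{i})C$), contradicting strictness. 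Hence $\lambda (c_{i}) \geq 1$ and the lengths strictly decrease: $\lambda (b_{k}) < \lambda (b_{k-1}) < \ldots < \lambda (b_{0}) = \lambda (a)$. This forces $k \leq \lambda (a)$.

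Combining the two observations finishes the proof: the set $\{ xC \colon aC \subseteq xC \}$ is itself a chain (by the first step), and any chain has cardinality at most $\lambda (a) + 1$ (by the second step), so in particular it is finite. Neither step is really the hard part — the argument is essentially that a length functor plus a totally ordered family yields Noetherianity on principal right ideals — but the only subtlety worth flagging is the verification that invertibility of the connecting arrow $c_{i}$ really does collapse the ideals, which is where left cancellativity enters only implicitly through the formalism already set up.
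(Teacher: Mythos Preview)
Your proof is correct and follows essentially the same strategy as the paper: use right rigidity to see that the principal right ideals above $aC$ form a chain, and use the length functor to see that distinct ideals in that chain have distinct lengths bounded by $\lambda(a)$, hence there are at most $\lambda(a)+1$ of them. The paper phrases the second step as ``two ideals with the same length must coincide'' rather than ``strict inclusion forces a strict length drop,'' but these are the same observation, and your closing remark about left cancellativity is a slight overstatement (it is not actually needed for the collapse $b_iC = b_{i+1}C$ when $c_i$ is invertible) but does no harm.
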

\begin{proof} Let $a \in C$ be any element with $e = \dom (a)$.
We need to prove that the set $[aC,eC]$ is finite.
Let $bC \in [aC,eC]$.
Then $a = bx$ for some $x$ and so $\lambda (a) \geq \lambda (b)$.
There is therefore an upper bound on the lengths of those elements $b$ such that $aC \subseteq bC$. 
Let $b_{1}C,b_{2}C \in [aC,eC]$ and suppose that $\lambda (b_{1}) = \lambda (b_{2})$.
By right rigidity, we may assume, without loss of generality that $b_{1}C \subseteq b_{2}C$.
Thus $b_{1} = b_{2}x$ for some $x$.
But $b_{1}$ and $b_{2}$ have the same length and so $x$ must have length zero.
It follows that $x$ is invertible.
Hence $b_{1}C = b_{2}C$.
It follows that the set $[aC,eC]$ is finite, as claimed.
\end{proof}

We shall now prove the converse to the above result.
This involves proving that every left Rees category is equipped with a length functor.
Let $a \in C$ be an element of a left Rees category.
We shall define the {\em length}, $\lambda (a)$, of $a$.
Put $\mathbf{d}(a) = e$.
By assumption, the set $[aC,eC]$ is finite and linearly ordered.
The proofs of the following are straightforward.

\begin{lemma} Let $C$ be a left Rees category.
\begin{enumerate} 

\item The set $[aC,eC]$ contains one element if and only if $a$ is invertible.

\item The set $[aC,eC]$ contains two elements if and only if $a$ is an atom.

\end{enumerate}
\end{lemma}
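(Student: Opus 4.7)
The plan is to reason directly from the definitions, using as the main tool the earlier lemma on left cancellative categories which asserts that any factorisation $e = xy$ of an identity forces $x$ to be invertible with inverse $y$. Throughout, I note that $aC \subseteq eC$ always, since $a = ea$ implies $aC = eaC \subseteq eC$; hence $[aC,eC]$ is always nonempty and contains $aC$ as its least and $eC$ as its greatest element, with $aC = eC$ if and only if the interval has one element.

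For part (1), the interval has exactly one element precisely when $aC = eC$. If $aC = eC$, then $e \in aC$, so $e = ax$ for some $x \in C$, and the cited lemma makes $a$ invertible. Conversely, if $a$ is invertible then $e = aa^{-1} \in aC$, which gives $eC \subseteq aC$ and hence equality.

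For part (2), I would handle the two directions separately. First, suppose $a$ is an atom. Then $a$ is not invertible, so by part (1) we have $aC \neq eC$; to show nothing lies strictly between, take $aC \subseteq bC \subseteq eC$, write $a = bc$, and use atom-ness to get $b$ or $c$ invertible. If $c$ is invertible then $aC = bC$; if $b$ is invertible then from $b \in eC$ one reads off $\mathbf{d}(b) = e$, so $bC = \mathbf{d}(b)C = eC$. Conversely, if $[aC, eC]$ has exactly two elements then $aC \neq eC$, so $a$ is not invertible; and for any factorisation $a = bc$ the ideal $bC$ lies in $[aC, eC]$, hence $bC = aC$ or $bC = eC$. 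In the former case, writing $b = ax$ and substituting gives $a = axc$, and left cancellation yields $xc = e$, so $c$ is invertible. In the latter, $e \in bC$ gives $e = by$, so $b$ is invertible. Therefore $a$ is an atom.

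The only delicate point is the domain/codomain bookkeeping that forces $\mathbf{d}(b) = e$ whenever $bC \subseteq eC$: one observes that $b \in eC$ means $b = ey$ for some $y$, so the composite $ey$ is defined, pinning down $\mathbf{d}(b) = \mathbf{r}(e) = e$. Once this small check is dispatched, everything else is routine manipulation of principal right ideals combined with left cancellation through the cited lemma.
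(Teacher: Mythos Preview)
Your argument is correct and is precisely the direct verification the paper has in mind; the paper itself gives no proof, merely declaring the result straightforward. One tiny notational slip: in the converse of part~(2), cancelling $a$ from $a = axc$ yields $xc = \mathbf{r}(a)$, not $xc = e = \mathbf{d}(a)$, but since $\mathbf{r}(a)$ is still an identity the cited lemma applies and the conclusion that $c$ is invertible stands.
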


Let $a \in C$ where $aC \subseteq eC$ where $e$ is an identity.
Define 
$$\lambda (a) = \left| [aC,eC] \right| -1.$$

\begin{lemma}\label{le: length_lemma} Let $C$ be a left Rees category and let $a,b \in C$ such that $ab$ is defined.
\begin{enumerate}

\item $a[bC,\dom (b)C] = [abC, aC]$.

\item $[abC,\mathbf{d}(a)C] = a[bC,\mathbf{d}(b)C] \cup [aC,\mathbf{d}(a)C]$.

\item If $bC \subseteq cC, dC \subseteq C$ and $acC = adC$ then $cC = dC$.

\end{enumerate}
\end{lemma}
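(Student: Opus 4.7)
The plan is to prove the three items in turn, with part~(3) supplying the left-cancellativity that lets us treat $xC \mapsto axC$ as a well-defined map on \emph{sets} of principal right ideals rather than on representatives.

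For (1), I will check mutual inclusion. Given $xC \in [bC,\dom(b)C]$, the containment $bC \subseteq xC$ yields $b = xy$ for some $y$, whence $ab = axy$ and $abC \subseteq axC \subseteq aC$; so $axC \in [abC,aC]$. Conversely, given $yC \in [abC,aC]$, the containment $yC \subseteq aC$ gives $y = az$ for some $z$, and $abC \subseteq azC$ produces $ab = azw$. Left cancellation delivers $b = zw$, hence $bC \subseteq zC$; since $\dom(z) = \ran(a) = \dom(b)$ we also have $zC \subseteq \dom(b)C$ automatically, so $zC \in [bC,\dom(b)C]$ and $yC = azC \in a[bC,\dom(b)C]$.

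For (2), the inclusion $\supseteq$ is immediate: by (1), $a[bC,\dom(b)C] = [abC,aC] \subseteq [abC,\dom(a)C]$ because $aC \subseteq \dom(a)C$, and $[aC,\dom(a)C] \subseteq [abC,\dom(a)C]$ because $abC \subseteq aC$. For $\subseteq$, take $xC \in [abC,\dom(a)C]$. Then $ab \in xC \cap aC$, so right rigidity forces $xC \subseteq aC$ or $aC \subseteq xC$. In the first case $xC \in [abC,aC]$, and (1) then expresses it as $ayC$ for some $yC \in [bC,\dom(b)C]$; in the second case $xC \in [aC,\dom(a)C]$. For (3), the equality $acC = adC$ means $ac = ad\xi$ and $ad = ac\eta$ for some $\xi,\eta$; left cancellation then gives $c = d\xi$ and $d = c\eta$, so $cC \subseteq dC$ and $dC \subseteq cC$.

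The only point that requires care is the well-definedness and injectivity of $xC \mapsto axC$ used in part~(1): well-definedness is immediate, since replacing $x$ by a unit multiple $xg$ replaces $ax$ by the unit multiple $(ax)g$; injectivity is exactly statement~(3). So although (3) is listed last, it is conceptually what upgrades the correspondence in (1) from a statement about representatives to an equality of sets of principal right ideals. Once this is recognised, there is no genuine obstacle: everything reduces to the defining properties of left Rees categories, namely left cancellation and right rigidity.
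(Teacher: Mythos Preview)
Your proof is correct and follows essentially the same route as the paper: mutual inclusion for (1) via left cancellation, the right-rigidity (linear-ordering) dichotomy for (2), and direct left cancellation for (3). Your added remark explaining how (3) supplies the injectivity that turns (1) into a genuine bijection of intervals---needed for the subsequent additivity of the length functor---is a welcome clarification that the paper leaves implicit.
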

\begin{proof} 
(1). Let $xC \in [bC,\dom (b)C]$. Then $bC \subseteq xC \subseteq \dom (b)C$.
It is immediate that $abC \subseteq axC \subseteq aC$.
Thus the lefthand side is contained in the righthand side.
Now let $abC \subseteq yC \subseteq aC$.
Then $y = au$ for some $u \in C$.
Thus $yC = auC = a(uC)$.
Now $ab = yv$ for some $v$.
Hence $ab = auv$ and so by left cancellation, we have that $b = uv$.
Thus $bC \subseteq uC \subseteq \dom (b)C$.

(2).  Let $c = ab$.
Then $cC \subseteq aC \subseteq \dom (a)C$.
Let $xC \in [cC,\mathbf{d}(c)C]$.
Since this set is linearly ordered either $xC \subseteq aC$ or $aC \subseteq xC$.
It follows that the lefthand side is contained in the righthand side.
The proof of the reverse containment is immediate.

(3). This is immediate by left cancellation.
\end{proof}

The proof of the following is now immediate.

\begin{lemma} Let $C$ be a left Rees category.
\begin{enumerate}

\item $\lambda (ab) = \lambda (a) + \lambda (b)$.

\item $\lambda (a) = 0$ if and only if $a$ is invertible.

\item $\lambda (a) = 1$ if and only if $a$ is an atom.

\end{enumerate}
\end{lemma}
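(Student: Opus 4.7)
The plan is to observe that parts (2) and (3) of the statement are essentially free from the previous lemma (the one that characterizes when $[aC,eC]$ has one or two elements), since $\lambda(a)$ is defined as $|[aC,\mathbf{d}(a)C]|-1$. So I would simply read off: $\lambda(a)=0 \Leftrightarrow |[aC,\mathbf{d}(a)C]|=1 \Leftrightarrow a$ is invertible, and $\lambda(a)=1 \Leftrightarrow |[aC,\mathbf{d}(a)C]|=2 \Leftrightarrow a$ is an atom. These are one-line deductions.

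The substantive part is (1), additivity along products. Here I would exploit Lemma~\ref{le: length_lemma} directly. Write $e = \mathbf{d}(a)$ and $f = \mathbf{d}(b) = \mathbf{r}(a)$. By part (2) of that lemma,
\[
[abC,eC] \;=\; a[bC,fC] \,\cup\, [aC,eC].
\]
The two pieces of this union meet exactly in $\{aC\}$: indeed $aC$ is the top element of $a[bC,fC] = [abC,aC]$ (by part (1) of the same lemma) and the bottom element of $[aC,eC]$, and any principal right ideal in $a[bC,fC]$ is contained in $aC$ while any principal right ideal in $[aC,eC]$ contains $aC$, so they can only coincide at $aC$ itself. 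Now I would invoke part (3) of Lemma~\ref{le: length_lemma} (which is really left cancellation) to see that the map $xC \mapsto axC$ from $[bC,fC]$ to $a[bC,fC]$ is a bijection, so these two sets have the same cardinality.

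Putting the pieces together by inclusion-exclusion,
\[
|[abC,eC]| \;=\; |a[bC,fC]| + |[aC,eC]| - 1 \;=\; |[bC,fC]| + |[aC,eC]| - 1,
\]
and subtracting $1$ from each side yields $\lambda(ab) = \lambda(a) + \lambda(b)$.

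There is no real obstacle: the whole content has been pre-packaged into Lemma~\ref{le: length_lemma}, and the only subtlety to articulate carefully is that the union in part (2) overlaps in precisely the single ideal $aC$, so one must remember to subtract one when counting.
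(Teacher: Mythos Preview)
Your proof is correct and follows exactly the route the paper intends: the paper simply declares the lemma ``immediate'' from the two preceding lemmas, and what you have written is precisely the spelling-out of that immediacy, including the key observation that the union in part~(2) of Lemma~\ref{le: length_lemma} overlaps only in $\{aC\}$ and that part~(3) makes $xC\mapsto axC$ a bijection.
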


We have therefore proved the following.

\begin{proposition}\label{prop: fred} 
A left cancellative, right rigid category is a left Rees category if and only if it is equipped with a length functor.
In other words, the left Rees categories are precisely the left cancellative Levi categories.
\end{proposition}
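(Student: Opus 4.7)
The plan is to observe that one direction—left cancellative, right rigid categories carrying a length functor are left Rees categories—is precisely the content of the lemma immediately preceding the proposition, so all the work lies in the converse. For that direction I would take a left Rees category $C$, adopt the function $\lambda(a) = |[aC, \mathbf{d}(a)C]| - 1$ that has already been introduced, and verify (LF1), (LF2), (LF3). Two of the three axioms fall out with no further effort from the preceding (unlabelled) lemma: $\lambda(a) = 0$ iff $[aC,\mathbf{d}(a)C]$ is a singleton iff $a$ is invertible, which is (LF2); and $\lambda(a) = 1$ iff $[aC,\mathbf{d}(a)C]$ has exactly two elements iff $a$ is an atom, which is (LF3). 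So the only real content is the additivity in (LF1).

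For (LF1), assume $ab$ is defined and combine the three parts of Lemma~\ref{le: length_lemma}. Part (2) expresses $[abC,\mathbf{d}(a)C]$ as the union $a[bC,\mathbf{d}(b)C] \cup [aC,\mathbf{d}(a)C]$. Part (1) identifies $a[bC,\mathbf{d}(b)C]$ with $[abC,aC]$, and part (3)—essentially left cancellation at the level of principal right ideals—shows that the map $xC \mapsto axC$ is injective, so $a[bC,\mathbf{d}(b)C]$ has the same cardinality as $[bC,\mathbf{d}(b)C]$. An element $xC$ of the intersection $a[bC,\mathbf{d}(b)C] \cap [aC,\mathbf{d}(a)C]$ must satisfy both $xC \subseteq aC$ (as an element of $[abC,aC]$) and $aC \subseteq xC$ (as an element of $[aC,\mathbf{d}(a)C]$), forcing $xC = aC$; conversely $aC$ clearly lies in both sets. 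Hence the intersection is the singleton $\{aC\}$, and inclusion–exclusion gives
\[
|[abC,\mathbf{d}(a)C]| \;=\; |[bC,\mathbf{d}(b)C]| \;+\; |[aC,\mathbf{d}(a)C]| \;-\; 1.
\]
Subtracting one from each cardinality yields $\lambda(ab) = \lambda(a) + \lambda(b)$.

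The main thing to be careful about is that singleton intersection: it is the one spot where the decomposition in Lemma~\ref{le: length_lemma}(2) could double-count and spoil additivity, so it warrants explicit justification even though the argument is short. Everything else is pure bookkeeping against the lemmas already assembled, and the proposition then follows by combining both directions.
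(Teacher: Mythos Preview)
Your proposal is correct and follows the paper's approach exactly: the paper assembles precisely the same sequence of lemmas and then declares the additivity of $\lambda$ (along with (LF2), (LF3)) to be ``immediate'' from Lemma~\ref{le: length_lemma}, whereas you have spelled out the inclusion--exclusion argument that makes (LF1) go through. Your explicit identification of the overlap $a[bC,\mathbf{d}(b)C] \cap [aC,\mathbf{d}(a)C] = \{aC\}$ is the detail the paper suppresses, and it is handled correctly.
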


The following result is worth noting here.

\begin{lemma} Let $C$ be a left Rees category.
Then each local monoid of $C$ is a left Rees monoid.
\end{lemma}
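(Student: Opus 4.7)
The plan is to fix an identity $e \in C_{o}$ and verify that the local monoid $M = eCe$ satisfies the three defining properties of a left Rees monoid: left cancellation, right rigidity, and the finiteness condition on chains of principal right ideals. Left cancellation in $M$ is inherited immediately from $C$, since any equation $ax = ay$ in $M$ is already an equation in $C$.

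For right rigidity, I would take $a, b \in M$ with $aM \cap bM \neq \emptyset$. Then $aC \cap bC \neq \emptyset$, so by right rigidity of $C$ we may assume, without loss of generality, that $aC \subseteq bC$, giving $a = bx$ for some $x \in C$. The crucial observation is that $x$ automatically lies in $M$: from $a = bx$ one reads $\dom(x) = \ran(b) = e$ and $\ran(x) = \ran(a) = e$, so $x \in eCe = M$, and hence $aM \subseteq bM$.

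The main work is the finiteness condition. Given $a \in M$, I would construct an injective map
$$\Phi \colon \{ bM : aM \subseteq bM \} \longrightarrow [aC, eC], \qquad bM \mapsto bC,$$
from the principal right ideals of $M$ containing $aM$ into the finite set $[aC, eC]$. To see that $\Phi$ is well-defined, if $bM = b'M$ then $b = b'u$ and $b' = bv$ for some $u, v \in M$, and left cancellation forces $uv = vu = e$, so that $u$ is a unit in $M$, hence invertible in $C$, whence $bC = b'C$. For injectivity, if $bC = b'C$ with $b, b' \in M$, then by Lemma~\ref{le: greensrelations}(2) we have $b' = bg$ for some invertible $g$ of $C$; computing $\dom(g) = \ran(b) = e$ and $\ran(g) = \ran(b') = e$ shows $g \in M$, and the same calculation for $g^{-1}$ puts $g^{-1} \in M$, so $g$ is a unit of $M$ and $bM = b'M$. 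Since $[aC, eC]$ is finite, so is the domain of $\Phi$, which is what we wanted.

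The recurring subtlety, and the only real obstacle, is the bookkeeping of domains and codomains needed to guarantee that completing arrows and invertible adjustments obtained from the ambient category $C$ always land in the local monoid $eCe$. Once these routine checks are in place, each left Rees axiom for $M$ transfers from $C$ in a direct fashion.
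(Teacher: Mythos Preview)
Your proof is correct and follows essentially the same route as the paper's: left cancellation is inherited, right rigidity is pulled back from $C$ by checking that the completing arrow lands in $eCe$, and finiteness is obtained via the injective assignment $bM \mapsto bC$ into the finite set $[aC,eC]$. The paper phrases the injectivity step contrapositively (showing $b_{1}S \subsetneq b_{2}S$ forces $b_{1}C \neq b_{2}C$), but the content is identical.
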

\begin{proof} Let $S = eCe$ be a local monoid.
It is immediate that it is a left cancellative monoid.
Let $aS \cap bS \neq \emptyset$ where $a,b \in S$.
Then clearly $aC \cap bC \neq \emptyset$.
Without loss of generality, it follows that $aC \subseteq bC$.
Thus $a = bc$ for some $c \in C$.
Now $eae = a$ and $b = ebe$ so that $a = b(ece)$ and $c = ece$.
We have therefore proved that $aS \subseteq bS$.
Thus $S$ is right rigid.
We denote by $[aS,bS]$ the obvious set of principal right ideals {\em in} $S$.
The set $[aS,eS]$ is linearly ordered.
We shall prove that it is finite.
Let $aS \subseteq b_{1}S \subset b_{2}S \subseteq eS$ where $b_{1},b_{2} \in S$.
We shall prove that $b_{1}C \neq b_{2}C$.
Suppose on the contrary that $b_{1}C = b_{2}C$.
Then $b_{1} = b_{2}g$ for some invertible element $g$.
then $b_{1} = b_{2}(ege)$ and $ege = g$.
Thus $g$ is an invertible element in $S$ and so $b_{1}S = b_{2}S$, a contradiction.
Since, by assumption, the set $[aC,bC]$ is finite it follows that the set $[aS,bS]$ is finite.
\end{proof}

\begin{remark} 
{\em Let $C$ be a left Rees category. 
It is important to observe that although $S = eCe$ is a left Rees monoid,
its length function need not be the restriction of the one in $C$. 
The length of the element $a \in S$, viewed as an element of the monoid $S$, 
is defined to be one less than the number of elements of $[aS,eS]$.
The length of the element $a \in S$ viewed as an element of $C$ is defined to be one less than the
number of elements in $[aC,eC]$.
But the latter set may contain more elements than the former.
Thus $\lambda_{S}(a) \leq \lambda_{C}(a)$. 
Consider the following example.
Let $C$ be the free category defined by the following directed graph
$$\xymatrix{e \ar@/^1pc/[r]^{a} & f \ar@/^1pc/[l]^{b}}$$
The local monoid at $e$ is just $S = (ab)^{\ast}$, the free monoid on one generator $ab$.
Here $\lambda_{S}(ab) = 1$ but $\lambda_{C}(ab) = 2$ since both $a$ and $b$ are atoms in $C$.}
\end{remark}

\begin{remark} {\em It can be shown that condition (LF3) is not needed in the definition of a length functor.
If Levi categories are defined with respect to this apparently weaker definition, a length functor may be easily
constructed that satisfies this condition using the analogue of Lemma~\ref{le: length_lemma}.
This was carried out explicitly in the monoid case described in \cite{LW}, 
and the proof there may be easily extended to the category case.}
\end{remark}

\section{Constructing Levi categories}

The goal of this section is to show that each Levi category
 is isomorphic to a tensor category over what we call a bimodule: 
that is, sets on which a given groupoid acts on the left and the right in such a  way that the two actions associate.

Let $G$ be a groupoid and let $X$ be a set equipped with two functions 
$$G_{0} \stackrel{\mathbf{s}}{\longleftarrow} X \stackrel{\mathbf{t}}{\longrightarrow} G_{o}.$$
We suppose that there is a left groupoid action $G \times X \rightarrow X$ and a right groupoid action
$X \times G \rightarrow X$ such that the two actions associate meaning $(gx)h = g(xh)$ when defined.
We write $\exists gx$ and $\exists xg$ if the actions are defined.
Observe that $\exists gx$ iff $\ran (g) = \mathbf{s}(x)$ and $\exists xg$ iff $\mathbf{t}(x) = \dom (g)$.
We call the structure $(G,X,G)$ a {\em bimodule} or a {\em $(G,G)$-bimodule}.
If whenever $\exists xg$ and $xg = x$ we have that $g$ is an identity, 
then we say the action is {\em right free}.
A bimodule which is right free is called a {\em covering bimodule}.
We define {\em left free} dually.
A bimodule which is both left and right free is said to be {\em bifree}.
We define {\em homomorphisms} and {\em isomorphisms} between $(G,G)$-bimodules in the usual way.

Our first result shows that bimodules arise naturally from Levi categories.
The proof is straightforward; in particular, 
the fact that the actions are well-defined follows from 
Lemma~\ref{le: atoms_closed_under_groupoid_products}. 

\begin{lemma}\label{le: category_to_bimodule}
Let $C$ be a Levi category.
Denote by $X$ the set of all atoms of $C$ equipped with the maps $\mathbf{d}, \mathbf{r} \colon X \rightarrow C_{0}$.
Denote by $G$ the groupoid of invertible elements of $C$.
Define a bimodule $(G,X,G)$ where the left and right actions are defined
via multiplication in $C$ when defined.
We obtain a covering bimodule if $C$ is left cancellative and a bifree bimodule if $C$ is cancellative
\end{lemma}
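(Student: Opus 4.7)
The plan is to verify the bimodule axioms step by step, relying on Lemma~\ref{le: atoms_closed_under_groupoid_products} for closure of the atom set $X$ under the actions and on the cancellation hypotheses for the freeness conditions.

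First I would take the source and target maps $\mathbf{s},\mathbf{t}$ on $X$ to be the restrictions of $\dom$ and $\ran$ to atoms, and define the two actions by restricting multiplication in $C$: set $g \cdot x = gx$ whenever $\ran(g) = \dom(x)$, and dually $x \cdot g = xg$. Lemma~\ref{le: atoms_closed_under_groupoid_products} shows immediately that $gx$ is again an atom; the analogous right-hand statement follows by reading the same argument on the opposite side, so the values of both actions really do land in $X$. The remaining groupoid-action identities---that category identities act trivially, that $(gh)x = g(hx)$ and $x(gh) = (xg)h$, and that the middle associativity $(gx)h = g(xh)$ holds---are all direct instances of associativity and identity laws in $C$, so nothing new needs to be verified.

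Next I would establish the covering property under left cancellativity. Given $\exists xg$ with $xg = x$, matching codomains forces both $\dom(g)$ and $\ran(g)$ to equal $\ran(x)$, so $g$ lies in the local monoid at $\ran(x)$. Rewriting $xg = x \cdot \ran(x)$ and left-cancelling $x$ yields $g = \ran(x)$, an identity; hence the right action is free and the bimodule is covering. For bifreeness, assuming additionally right cancellation, the argument is strictly dual: from $gx = \dom(x) \cdot x$ one cancels $x$ on the right to conclude $g = \dom(x)$.

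The only step that actually requires machinery beyond the defining axioms is closure of $X$ under the actions, and this is packaged entirely by Lemma~\ref{le: atoms_closed_under_groupoid_products} and its symmetric counterpart. Everything else is a direct translation of the category laws and the cancellation hypotheses into the action language, so I do not anticipate any real obstacle.
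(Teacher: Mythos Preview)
Your proposal is correct and follows exactly the approach the paper itself indicates: the paper's proof consists of a single sentence noting that the result is straightforward, with well-definedness of the actions handled by Lemma~\ref{le: atoms_closed_under_groupoid_products}. You have simply unpacked the routine verifications that the paper leaves implicit, including the dual form of that lemma and the cancellation arguments for freeness, and all of these are carried out accurately.
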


We call $(G,X,G)$ constructed as in the above lemma, the {\em bimodule associated with $C$}
or the {\em bimodule of atoms of $C$}.

Our goal now is to show that from each bimodule we may construct a Levi category.
Our tool for this will be tensor products and the construction of a suitable tensor algebra analogous to the ones defined in module theory; 
see Chapter~6 of \cite{Street}, for example.
We recall the key definitions and results we need first.

Let $G$ be a groupoid that acts on the set $X$ on the right and the set $Y$ on the left.
We consider the set $X \ast Y$ consisting of those pairs $(x,y)$ where $\mathbf{t}(x) = \mathbf{s}(y)$. 
A function $\alpha \colon X \ast Y \rightarrow Z$ to a set $Z$ is called a {\em bi-map} or a {\em 2-map}
if $\alpha (xg,y) = (x,gy)$ for all $(xg,y) \in X \ast Y$ where $g \in G$.
We may construct a universal such bimap $\lambda \colon X \ast Y \rightarrow X \otimes Y$
in the usual way \cite{EZ}.
However, there is a simplification in the theory due to the fact that we are acting by means of a groupoid.
The element $x \otimes y$ in $X \otimes Y$ is the equivalence class of $(x,y) \in X \ast Y$ under the relation $\sim$
where $(x,y) \sim (x',y')$ if and only if $(x',y') = (xg^{-1},gy)$ for some $g \in G$.
Observe that we may define $\mathbf{s}(x \otimes y) = \mathbf{s}(x)$ and $\mathbf{t}(x \otimes y) = \mathbf{t}(y)$
unambiguously.

Suppose now that $X$ is a $(G,G)$-bimodule.
We may therefore define the tensor product $X \otimes X$ as a set.
This set is equipped with maps $\mathbf{s}, \mathbf{t} \colon X \otimes X \rightarrow G_{o}$.
We define $g(x \otimes y) = gx \otimes y$ and $(x \otimes y)g = x \otimes yg$ when this makes sense.
Observe that $x \otimes y = x' \otimes y'$ implies that $gx \otimes y = gx' \otimes y'$, and dually.
It follows that $X \otimes X$ is a also a bimodule.
Put $X^{\otimes 2} = X \otimes X$.
More generally, we may define  $X^{\otimes n}$ for all $n \geq 1$ using $n$-maps, and we define $X^{\otimes 0} = G$
where $G$ acts on itself by multiplication on the left and right.
The proof of the following lemma is almost immediate from the definition and the
fact that we are acting by a groupoid.

\begin{lemma}\label{le: equality_of_tensors}
Let $n \geq 2$.
Then
$$x_{1} \otimes \ldots \otimes x_{n} = y_{1} \otimes \ldots \otimes y_{n}$$
if and only if 
there are elements $g_{1}, \ldots, g_{n-1} \in G$
such that
$y_{1} = x_{1}g_{1}$,
$y_{2} = g_{1}^{-1}x_{2}g_{2}$,
$y_{3} = g_{2}^{-1}x_{3}g_{3}$,
$\ldots$,
$y_{n} = g_{n-1}^{-1}x_{n}$.
\end{lemma}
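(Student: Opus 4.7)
The proof naturally splits into two directions. For the sufficiency direction (assume the $g_i$ exist and deduce equality of tensors), the plan is a direct calculation: starting from $x_1 \otimes x_2 \otimes \ldots \otimes x_n$, I apply the basic tensor relation $a \otimes b = ag^{-1} \otimes gb$ at each consecutive pair of factors. Taking $g = g_1^{-1}$ at the first junction rewrites the first two factors as $x_1 g_1 \otimes g_1^{-1} x_2 \otimes x_3 \otimes \ldots \otimes x_n$; then taking $g = g_2^{-1}$ at the next junction absorbs $g_2$ across, yielding $x_1 g_1 \otimes g_1^{-1} x_2 g_2 \otimes g_2^{-1} x_3 \otimes \ldots \otimes x_n$; iterating through all $n-1$ junctions yields exactly $y_1 \otimes y_2 \otimes \ldots \otimes y_n$.

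For the necessity direction, the plan is induction on $n$. The base case $n=2$ is essentially the definition of the equivalence relation $\sim$ used to construct $X \otimes X$: if $x_1 \otimes x_2 = y_1 \otimes y_2$, then by definition there is some $g \in G$ with $y_1 = x_1 g^{-1}$ and $y_2 = g x_2$, so setting $g_1 = g^{-1}$ gives the desired pair $y_1 = x_1 g_1$ and $y_2 = g_1^{-1} x_2$. For the inductive step, I would invoke the canonical identification $X^{\otimes n} \cong X \otimes X^{\otimes (n-1)}$, which follows from comparing universal properties of $n$-maps versus iterated bimaps. Under this identification, the given equality becomes $x_1 \otimes (x_2 \otimes \ldots \otimes x_n) = y_1 \otimes (y_2 \otimes \ldots \otimes y_n)$ in $X \otimes X^{\otimes(n-1)}$, so the $n=2$ case produces $g_1 \in G$ with $y_1 = x_1 g_1$ and
\[
y_2 \otimes y_3 \otimes \ldots \otimes y_n \;=\; (g_1^{-1} x_2) \otimes x_3 \otimes \ldots \otimes x_n
\]
in $X^{\otimes(n-1)}$. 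Applying the induction hypothesis to this last equality yields invertible elements $g_2, \ldots, g_{n-1}$ with $y_2 = (g_1^{-1}x_2) g_2 = g_1^{-1} x_2 g_2$, $y_3 = g_2^{-1} x_3 g_3$, and so on up to $y_n = g_{n-1}^{-1} x_n$, exactly as required.

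The main obstacle is the associativity identification $X^{\otimes n} \cong X \otimes X^{\otimes(n-1)}$, since the lemma statement treats the $n$-fold tensor as a primitive object defined through $n$-maps rather than iterated binary tensors. This is standard and follows formally from the universal properties (any $n$-map factors uniquely through either construction), but it must be acknowledged to legitimise the reduction. Once associativity is in place, the induction is mechanical, and the fact that we can freely invert the slid group elements (so $g = g_1^{-1}$ makes sense) is exactly where the groupoid hypothesis, as opposed to a mere monoid action, is used.
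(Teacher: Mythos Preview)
Your argument is correct. The sufficiency direction is exactly right, and the inductive necessity argument via the identification $X^{\otimes n}\cong X\otimes X^{\otimes(n-1)}$ goes through cleanly; you have correctly flagged that this associativity isomorphism is the one point needing justification, and it is indeed standard from the universal properties.

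The paper does not spell out a proof, merely remarking that the result is ``almost immediate from the definition and the fact that we are acting by a groupoid''. The intended direct route is presumably this: since $X^{\otimes n}$ is defined via $n$-maps, the underlying equivalence on composable $n$-tuples is the one generated by the elementary moves $(\ldots,x_ig,x_{i+1},\ldots)\sim(\ldots,x_i,gx_{i+1},\ldots)$ at each junction; one then simply checks that the relation described in the lemma statement is itself already an equivalence relation containing all these generators, hence equals the generated equivalence. Reflexivity uses identities, symmetry uses inverses, and transitivity uses that a product of invertibles is invertible --- this is exactly where the groupoid hypothesis enters. Your inductive approach reaches the same conclusion by peeling off one factor at a time instead of verifying the equivalence-relation axioms globally; it trades the small verification of transitivity for an appeal to associativity of the tensor construction. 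Both routes are short and essentially equivalent in content.
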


Define
$$\mathsf{T}(X) = \bigcup_{n = 0}^{\infty} X^{\otimes n}.$$
We shall call this the {\em tensor category} associated with the bimodule $(G,X,G)$.
Observe that we may regard $X$ as a subset of $\mathsf{T}(X)$. 
The justification for this terminology will follow from (1) below.

\begin{theorem}\label{the: first_theorem}\mbox{}
\begin{enumerate}

\item $\mathsf{T}(X)$ is a Levi category whose associated bimodule is $(G,X,G)$.

\item The category $\mathsf{T}(X)$ is left cancellative if and only if $(G,X,G)$ is right free, and dually.

\item Let $C$ be a category whose groupoid of invertible elements is $G$.
Regard $C$ as a $(G,G)$-bimodule under left and right multiplication.
Let $\theta \colon X \rightarrow C$ be any bimodule morphism to $C$.
Then there is a unique functor $\Theta \colon \mathsf{T}(X) \rightarrow C$ extending $\theta$.

\item Every Levi category is isomorphic to the tensor category of its associated bimodule.

\end{enumerate}
\end{theorem}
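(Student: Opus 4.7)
The plan is to construct the category structure on $\mathsf{T}(X)$ explicitly, verify the Levi axioms and a universal property for it, and then read part (4) off that universal property.

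For part (1), define the product in $\mathsf{T}(X)$ by concatenation of simple tensors: given $u = x_1 \otimes \cdots \otimes x_m$ in $X^{\otimes m}$ and $v = y_1 \otimes \cdots \otimes y_n$ in $X^{\otimes n}$ with $\mathbf{t}(u) = \mathbf{s}(v)$, set
$$u \cdot v = x_1 \otimes \cdots \otimes x_m \otimes y_1 \otimes \cdots \otimes y_n,$$
with the boundary cases $m = 0$ or $n = 0$ reducing to the already-defined $G$-actions on $X^{\otimes n}$ and $X^{\otimes m}$. Well-definedness on equivalence classes and associativity follow directly from Lemma~\ref{le: equality_of_tensors}, and the identities of $\mathsf{T}(X)$ are those of $G = X^{\otimes 0}$. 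Equip $\mathsf{T}(X)$ with $\lambda(t) = n$ for $t \in X^{\otimes n}$, which is well-defined since the tensor equivalence preserves length. Axiom (LF1) is immediate. For (LF2), additivity of $\lambda$ together with the fact that all of $G$ is invertible forces the invertible elements to be exactly $G$. For (LF3), an element of $X$ cannot factor nontrivially since $\lambda$ is additive and $\lambda(x) = 1$, while any tensor of length at least $2$ visibly factors as its first letter times the remaining tail. Equidivisibility also reduces to Lemma~\ref{le: equality_of_tensors}: if $u \cdot v = u' \cdot v'$ with $\lambda(u) \leq \lambda(u')$, the interleaving data extracts a middle block of length $\lambda(u') - \lambda(u)$ serving as a mediating $w$ with $u' = u \cdot w$ and $v = w \cdot v'$. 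By construction the atoms of $\mathsf{T}(X)$ are $X$ and the invertibles are $G$, so the associated bimodule is $(G,X,G)$.

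For part (2), suppose the right $G$-action on $X$ is free and that $u \cdot v = u \cdot v'$ in $\mathsf{T}(X)$. Applying Lemma~\ref{le: equality_of_tensors} to the common tensor of length $\lambda(u) + \lambda(v)$, the interleaving elements in the first $\lambda(u)$ positions fix the letters of $u$; right-freeness forces each of them in turn to be an identity, propagating along the tensor and giving $v = v'$. Conversely, if a non-identity $g$ fixes an atom $x$ on the right, then $x \cdot g = x = x \cdot \mathbf{t}(x)$ in $\mathsf{T}(X)$ while $g \neq \mathbf{t}(x)$, so left cancellation fails; the dual claim is symmetric. Part (3) is then formal: set $\Theta(x_1 \otimes \cdots \otimes x_n) = \theta(x_1) \theta(x_2) \cdots \theta(x_n)$ for $n \geq 1$ and let $\Theta$ be the inclusion $G \hookrightarrow C$ on $X^{\otimes 0}$. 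Well-definedness on equivalence classes is exactly where we use that $\theta$ is a bimodule morphism; functoriality is direct from the definition of the product; uniqueness holds because $\mathsf{T}(X)$ is generated as a category by $G \cup X$.

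For part (4), start with a Levi category $C$, form its associated bimodule $(G,X,G)$ via Lemma~\ref{le: category_to_bimodule}, and apply part (3) to the inclusion $\theta \colon X \hookrightarrow C$ to obtain a functor $\Theta \colon \mathsf{T}(X) \to C$. Surjectivity of $\Theta$ on each hom-set is Lemma~\ref{le: atomic_decomposition}(3). The main obstacle, and the step on which the whole theorem pivots, is injectivity: if $\Theta(x_1 \otimes \cdots \otimes x_m) = \Theta(y_1 \otimes \cdots \otimes y_n)$, that is $x_1 \cdots x_m = y_1 \cdots y_n$ as arrows in $C$, then Lemma~\ref{le: uniqueness} forces $m = n$ and produces invertible elements $g_1, \ldots, g_{n-1}$ satisfying $y_1 = x_1 g_1$, $y_i = g_{i-1}^{-1} x_i g_i$ for $1 < i < n$, and $y_n = g_{n-1}^{-1} x_n$. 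These are exactly the conditions in Lemma~\ref{le: equality_of_tensors} under which $x_1 \otimes \cdots \otimes x_n = y_1 \otimes \cdots \otimes y_n$ in $\mathsf{T}(X)$, so $\Theta$ is injective. The conceptual content of the argument is that the tensor equivalence and the interleaving description of uniqueness in a Levi category are literally the same relation, which is precisely what makes the free tensor construction the correct categorification of factorization in a Levi category.
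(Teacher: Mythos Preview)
Your argument is correct and follows essentially the same route as the paper's own proof: the multiplication on $\mathsf{T}(X)$ is defined by concatenation with the boundary cases given by the $G$-actions, the length functor is tensor-length, equidivisibility is read off Lemma~\ref{le: equality_of_tensors}, parts (2) and (3) are handled exactly as the paper does, and part (4) is obtained by combining the universal functor of (3) with Lemma~\ref{le: atomic_decomposition}(3) for surjectivity and Lemma~\ref{le: uniqueness}(2) for injectivity. Your closing observation that the interleaving relation of Lemma~\ref{le: uniqueness} and the tensor-equality relation of Lemma~\ref{le: equality_of_tensors} are literally the same is precisely the point on which the paper's terse proof of (4) rests.
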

\begin{proof} (1). The identities of the category are the same as the identities of $G$.
The element $x_{1} \otimes \ldots \otimes x_{n}$ has domain $\mathbf{d}(x_{1})$ and codomain $\ran (x_{n})$.
Multiplication is tensoring of sequences that begin and end in the right places and left and right actions by elements of $G$.
We define $\lambda (g) = 0$ where $g \in G$ and $\lambda (x_{1} \otimes \ldots \otimes x_{n}) = n$.
Formally, we are using the fact that there is a canonical isomorphism 
$$X^{\otimes p} \otimes X^{\otimes q} \cong X^{\otimes (p+q)}.$$
The proof of equidivisibility is essentially the same as that of Proposition~5.6 of \cite{Lawson2008a}.
The elements of length 0 are precisely the elements of $G$ and so the invertible elements;
the elements of length 1 are precisely the elements of $X$.

(2). Suppose that the category is left cancellative and that $xg = x = x\dom (g)$ in the bimodule.
But this can also be interpreted as a product in the category and so $g = \dom (g)$, as required.
Conversely, suppose that the bimodule is right free.
Let $\mathbf{x} \otimes \mathbf{y} = \mathbf{x} \otimes \mathbf{z}$.
From Lemma~\ref{le: equality_of_tensors} and the fact that lengths match,
we have that $(\mathbf{x},\mathbf{y}) = (\mathbf{x}g, g^{-1}\mathbf{z})$ for some $g \in G$.
But using the fact that the action is right free, we get that $g$ is an identity and so $\mathbf{y} = \mathbf{z}$,
as required.

(3). Define $\Theta (g) = g$ when $g \in G$ and 
$\Theta (x_{1} \otimes x_{2} \otimes \ldots \otimes x_{n}) 
=
\theta (x_{1}) \otimes \theta (x_{2}) \otimes \ldots \otimes \theta (x_{n})$.
This is well-defined by Lemma~\ref{le: equality_of_tensors}.
It is routine to check that this defines a functor.

(4). This now follows from (3) above, part (3) of Lemma~\ref{le: atomic_decomposition}, 
and part (2) of Lemma~\ref{le: uniqueness}.
\end{proof}

\begin{remark}
{\em It follows by the above theorem that left Rees categories are described by covering bimodules.}
\end{remark}

\section{Left Rees categories}

Left cancellative Levi categories are what we call left Rees categories.
Such Levi categories admit an alternative description from the tensor category description of the previous section.
Here, we shall describe the structure of arbitrary left Rees categories in terms of free categories
using Zappa-Sz\'ep products generalized to categories; see \cite{Brin}.
This will show that they can be regarded as the categories associated with self-similar groupoid actions.
The material in this section can be regarded as a special case of \cite{Lawson2008b}.
However, we have included it for the sake of completeness.

Let $G$ be a groupoid with set of identities $G_{0}$ and let $C$ be a category with set of identities $C_{o}$.
We shall suppose that there is a bijection between $G_{0}$ and $C_{o}$ and, to simplify notation, we shall identify these two sets.
Denote by $G \ast C$ the set of pairs
$(g,x)$ such that $\ran (g) = \dom (x)$. 
We shall picture such pairs as follows:
$$\spreaddiagramrows{2pc}
\spreaddiagramcolumns{2pc}
\diagram
&\dto_{g} 
& 
\\
&\rto_{x}
&
\enddiagram$$
We suppose that there is a function 
$$G \ast C \rightarrow C \text{ denoted by } (g,x) \mapsto g \cdot x,$$
which gives a left action of $G$ on $C$
and a function
$$G \ast C \rightarrow G \text{ denoted by } (g,x) \mapsto g|_{x},$$
which gives a right action of $C$ on $G$
such that these two functions satisfy the following conditions:
\begin{description}
\item[{\rm (C1)}] $\dom (g \cdot x) = \dom (g)$.
\item[{\rm (C2)}] $\ran (g \cdot x) = \dom (g|_{x})$.
\item[{\rm (C3)}] $\ran (x) = \ran (g|_{x})$.
\end{description}
This information is summarized by the following diagram
$$\spreaddiagramrows{2pc}
\spreaddiagramcolumns{2pc}
\diagram
& \dto_{g} \rto^{g \cdot x}
& \dto^{g|_{x}}
\\
& \rto_{x}
&
\enddiagram$$
We also require that the following axioms be satisfied:
\begin{description}

\item[{\rm (SS1)}] $\dom (x) \cdot x = x$. Observe that this is the action, not the category product.

\item[{\rm (SS2)}] If $gh$ is defined then $(gh) \cdot x = g \cdot (h \cdot x)$.

\item[{\rm (SS3)}] $\dom (g) = g \cdot \ran (g)$.

\item[{\rm (SS4)}] $\dom (x)|_{x} = \ran (x)$.

\item[{\rm (SS5)}] $g|_{\ran (g) } = g$.

\item[{\rm (SS6)}] If $xy$ is defined and $\ran (g) = \dom (x)$ then $g|_{xy} = (g|_{x})|_{y}$.

\item[{\rm (SS7)}] If $gh$ is defined and $\ran (h) = \dom (x)$ then $(gh)|_{x} = g|_{h \cdot x} h|_{x}$.

\item[{\rm (SS8)}] If $xy$ is defined and $\ran (g) = \dom (x)$ then $g \cdot (xy) = (g \cdot x)(g|_{x} \cdot y)$.
\end{description}
If there are maps $(g,x) \mapsto g \cdot x$ and $(g,x) \mapsto g|_{x}$ satisfying (C1)--(C3) and (SS1)--(SS8) then we say that there is a {\em self-similar action of $G$ on $C$.}
Put 
$$C \bowtie G = \{(x,g) \in C \times G \colon \: \ran (x) = \dom (g) \}.$$
We represent $(x,g)$ by the diagram
$$\spreaddiagramrows{2pc}
\spreaddiagramcolumns{2pc}
\diagram
&\rto^{x}
&\dto^{g}
\\
&
&
&
\enddiagram$$
Given elements $(x,g)$ and $(y,h)$ satisfying
$\ran (g) = \dom (y)$
we then have the following diagram
$$\spreaddiagramrows{2pc}
\spreaddiagramcolumns{2pc}
\diagram
&\rto^{x}
&\dto^{g} \rto^{g \cdot y}
& \dto^{g|_{y}}
\\
&
& \rto^{y} 
& \dto^{h}
\\
&
&
&
\enddiagram$$
Completing the square, as shown, 
enables us to define a partial binary operation on $C \bowtie G$ by 
$$(x,g)(y,h) = (x(g \cdot y), g|_{y}h).$$

\begin{lemma} Let $G$ be a groupoid having a self-similar action on the category $C$.
\begin{enumerate}

\item If $y$ is an invertible element of $C$ then so too is $g \cdot y$.

\item If $g \in G$ and $x \in X$ is an atom then $g \cdot x$ is an atom.

\item If $C$ is a left Rees category with length function $\lambda$ then $\lambda (g \cdot x) = \lambda (x)$.

\end{enumerate}
\end{lemma}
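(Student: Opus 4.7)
The plan is to prove the three parts in order, each building on the previous. For (1), I would construct an explicit inverse for $g\cdot y$; the natural candidate is $g|_{y}\cdot y^{-1}$. Applying (SS8) to the product $yy^{-1}=\dom(y)$, together with (SS3) (which gives $g\cdot\dom(y)=g\cdot\ran(g)=\dom(g)$), yields $(g\cdot y)(g|_{y}\cdot y^{-1})=\dom(g)=\dom(g\cdot y)$. For the reverse composition, applying (SS8) to $y^{-1}y=\ran(y)$ gives $(g|_{y}\cdot y^{-1})((g|_{y})|_{y^{-1}}\cdot y)=\dom(g|_{y})$, and the key simplification is that (SS6) and (SS5) together collapse $(g|_{y})|_{y^{-1}}=g|_{yy^{-1}}=g|_{\ran(g)}=g$, so this composition equals $\ran(g\cdot y)$ as required.

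For (2), suppose $g\cdot x=bc$. The tactic is to pull back by $g^{-1}$: by (SS1) and (SS2) we have $x=g^{-1}\cdot(g\cdot x)=g^{-1}\cdot(bc)$, and then (SS8) rewrites this as $x=(g^{-1}\cdot b)(g^{-1}|_{b}\cdot c)$. Atomicity of $x$ forces one of the two factors to be invertible. To transfer this back to $b$ or $c$ one notes, using the same pull-back argument, that $b=g\cdot(g^{-1}\cdot b)$ and $c=(g^{-1}|_{b})^{-1}\cdot(g^{-1}|_{b}\cdot c)$, so part (1) applied to whichever factor is invertible gives that $b$ or $c$ itself is invertible.

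For (3), I would factor $x=x_{1}\cdots x_{n}$ into atoms via Lemma~\ref{le: atomic_decomposition}(3), so that $\lambda(x)=n$ by (LF1) and (LF3). Iterating (SS8) and using (SS6) to collapse the nested restrictions produces the factorization $g\cdot x=(g\cdot x_{1})(g|_{x_{1}}\cdot x_{2})(g|_{x_{1}x_{2}}\cdot x_{3})\cdots(g|_{x_{1}\cdots x_{n-1}}\cdot x_{n})$, a product of $n$ factors each of which is an atom by part (2); hence $\lambda(g\cdot x)=n=\lambda(x)$. The main obstacle is the careful bookkeeping in (1): all three axioms (C1)--(C3) have to be tracked to verify that the candidate compositions are even defined, and the proof hinges on spotting that the iterated restriction $(g|_{y})|_{y^{-1}}$ collapses back to $g$; once this identity is in hand, (2) and (3) follow by the routine unpacking sketched above.
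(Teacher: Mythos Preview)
Your proposal is correct and follows essentially the same route as the paper: in (1) you exhibit $g|_{y}\cdot y^{-1}$ as the inverse via (SS8) and (SS3); in (2) you pull back a factorization of $g\cdot x$ through $g^{-1}$ using (SS1), (SS2), (SS8) and then invoke (1); in (3) you factor $x$ into atoms and expand $g\cdot x$ by iterated use of (SS8). The only notable difference is that in (1) you verify both compositions $(g\cdot y)(g|_{y}\cdot y^{-1})$ and $(g|_{y}\cdot y^{-1})(g\cdot y)$, using the collapse $(g|_{y})|_{y^{-1}}=g|_{yy^{-1}}=g|_{\ran(g)}=g$ for the second, whereas the paper checks only the first composition and then asserts invertibility; your version is the more complete of the two.
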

\begin{proof} (1).  Suppose that $e \stackrel{y}{\rightarrow} f$.
We have that $g \cdot e = \dom (g)$, by (SS3), and $e = yy^{-1}$.
Thus $\dom (g) = (g \cdot y)(g|_{y} \cdot y^{-1})$.
It follows that $g \cdot y$ is invertible with inverse $g|_{y} \cdot y^{-1}$.

(2). Let $g \cdot x = uv$, where $u,v \in C$.
By axioms (SS1), (SS2) and (SS8), we have that 
$$x = (g^{-1} \cdot u)(g^{-1}|_{u} \cdot v).$$
By assumption, $x$ is an atom and so at least one of the elements in the product is invertible.
Suppose that $g^{-1} \cdot u$ is invertible.
Then by our result above $g \cdot (g^{-1} \cdot u) = u$ is invertible.
Suppose now that $g^{-1}|_{u} \cdot v$ is invertible then we may deduce that $v$ is invertible.
We have therefore proved that $g \cdot x$ is an atom.

(3). Write $x = x_{1} \ldots x_{n}$ a product of atoms where $n = \lambda (x)$.
Now use (SS8).
\end{proof}

\begin{proposition} Let $G$ be a groupoid having a self-similar action on the category $C$.
\begin{enumerate}

\item $C \bowtie G$ is a category.

\item $C \bowtie G$ contains copies $C'$ and $G'$ of $C$ and $G$ respectively
such that each element of $C \bowtie G$ can be written as a product
of a unique element from $C'$ followed by a unique element from $G'$.

\item If $C$ is left cancellative then so too is $C \bowtie G$.

\item If $C$ is left cancellative then the set of invertible elements of $C \bowtie G$ 
consists of all those elements $(x,g)$ where $x$ is invertible in $C$

\item If $C$ is left cancellative then the set of atoms in $C \bowtie G$ 
consists of all those elements $(x,g)$ where $x$ is an atom in $C$.

\item If $C$ is left cancellative and right rigid then so too is $C \bowtie G$.

\item If $C$ is a left Rees category then so too is $C \bowtie G$.

\end{enumerate}
\end{proposition}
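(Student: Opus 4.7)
The plan is to verify the seven parts in order, each building on its predecessors. The overarching point is that $C \bowtie G$ is engineered so that (SS1)--(SS8) translate precisely into the required equations; most of the work is matching up the right axiom at each step.

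For (1), conditions (C1)--(C3) immediately make the multiplication well-defined, since they guarantee $\ran(g \cdot y) = \dom(g|_{y})$ and $\ran(g|_{y}) = \ran(y)$, so both the $C$-product $x(g \cdot y)$ and the $G$-product $g|_{y}h$ compose. Associativity follows by direct expansion of $((x,g)(y,h))(z,k)$ and $(x,g)((y,h)(z,k))$: the $C$-components agree using (SS2) and (SS8), the $G$-components using (SS6) and (SS7). The identities are the pairs $(e,e)$ with $e \in C_{0} = G_{0}$, with the left identity law coming from (SS1) and (SS4) and the right one from (SS3) and (SS5). For (2), take $C' = \{(x, \ran(x)) : x \in C\}$ and $G' = \{(\dom(g), g) : g \in G\}$; a short calculation using (SS1) and (SS4) shows $(x, \ran(x))(\dom(g), g) = (x, g)$, and uniqueness is immediate from the coordinates.

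With the factorization in (2), the remaining structural claims unfold easily. For (3), cancelling $(x,g)$ on the left reduces, via left cancellation in $C$ and then acting by $g^{-1}$ through (SS1)--(SS2), to $y_{1} = y_{2}$, after which $h_{1} = h_{2}$ by groupoid cancellation. For (4), if $(x,g)(y,h)$ is an identity then $x(g \cdot y) = \dom(x)$, and part (3) of the lemma on left cancellative categories forces $x$ to be invertible; conversely, if $x$ is invertible then the two factors $(x, \ran(x))$ and $(\dom(g), g)$ are each visibly invertible (the latter with inverse $(\ran(g), g^{-1})$). For (5), if $x$ is an atom in $C$ and $(x,g) = (y_{1}, h_{1})(y_{2}, h_{2})$ then $x = y_{1}(h_{1} \cdot y_{2})$, so one factor is invertible in $C$; part (1) of the immediately preceding lemma, combined with (SS1)--(SS2), shows that invertibility of $h_{1} \cdot y_{2}$ propagates to $y_{2}$, and invertibility of either coordinate lifts to invertibility of the corresponding pair by (4). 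Conversely, any decomposition $x = uv$ lifts to $(x, g) = (u, \ran(u))(v, g)$, so atomicity of $(x,g)$ forces atomicity of $x$.

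For (6), the decisive observation is that $(x, g) = (x, \ran(x))(\dom(g), g)$ with the second factor invertible, so $(x, g)(C \bowtie G) = (x, \ran(x))(C \bowtie G)$; right rigidity in $C \bowtie G$ then reduces to right rigidity in $C$ by projecting any element of an overlap onto its first coordinate and reconstructing back via (2). For (7), I would define $\lambda(x, g) := \lambda_{C}(x)$ and verify (LF1)--(LF3), using part (3) of the preceding lemma on self-similar actions for additivity under the twisted multiplication, and using (4) and (5) for the length-zero and length-one cases. An appeal to Proposition~\ref{prop: fred} then completes the argument. The main obstacle is not conceptual but bookkeeping: every step involves a chain of domain/codomain checks and a targeted invocation of one of the eight self-similarity axioms, with associativity in (1) the most intricate, since (SS2), (SS6), (SS7), and (SS8) must all land in exactly the right places.
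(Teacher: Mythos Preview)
Your proposal is correct and follows essentially the same approach as the paper's proof, invoking the same self-similarity axioms at the same junctures. The only noteworthy deviations are cosmetic: in (4) the paper writes down an explicit inverse for $(x,g)$ when $x$ is invertible, whereas you factor through $C'$ and $G'$; and in (6) the paper carries out a direct computation producing the element $(k^{-1}\cdot w,\,(k|_{k^{-1}\cdot w})^{-1}g)$ witnessing $(x,g)\in(u,k)(C\bowtie G)$, whereas your observation that $(x,g)(C\bowtie G)=(x,\ran(x))(C\bowtie G)$ reduces the question more cleanly to right rigidity of $C$.
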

\begin{proof}
(1) Define $\dom (x,g) = (\dom (x), \dom (x))$ and $\ran (x,g) = (\ran (g), \ran (g))$.
The condition for the existence of $(x,g)(y,h)$ is that $\ran (x,g) = \dom (y,h)$.
Axioms (C1),(C2) and (C3) then guarantee the existence of $(x(g \cdot y), g|_{y}h)$.
We therefore have a partially defined multiplication.
We next locate the identities.
Suppose that $(u,a)$ is an element such that if $(u,a)(x,g)$ is defined then $(u,a)(x,g) = (x,g)$.
Now $(u,a)(\ran (a),\ran (a))$ is defined.
We deduce that
$\ran (a) = u(a \cdot \ran (a))$ 
and
$\ran (a) = a|_{\ran (a)} \ran (a)$.
By (SS5), we have that $a = \ran (a)$ and by (SS3) that $a \cdot \ran (a) = \dom (a)$. 
Thus $(u,a) = (\ran (a), \ran (a))$.
By (SS1) and (SS4), we deduce that the identities are the elements of the form $(e,e)$ where $e \in C_{o} = G_{o}$.
Observe that $\dom [(x,g)(y,h)] = \dom (x,g)$ and $\ran [(x,g)(y,h)] = \ran (y,h)$.
It remains only to prove associativity.

Suppose first that 
$[(x,g)(y,h)](z,k)$
exists.
The product $(x,g)(y,h)$ exists and so we have the following diagram
$$\spreaddiagramrows{2pc}
\spreaddiagramcolumns{2pc}
\diagram
&\rto^{x}
&\dto^{g} \rto^{g \cdot y}
& \dto^{g|_{y}}
\\
&
& \rto^{y} 
& \dto^{h}
\\
&
&
&
\enddiagram$$
similarly 
$[(x,g)(y,h)](z,k)$ exists
and so we have the following diagram
$$\spreaddiagramrows{2pc}
\spreaddiagramcolumns{2pc}
\diagram
& \rto^{x(g \cdot y)}
& \dto^{g|_{y}h} \rto^{( g|_{y} h) \cdot z}
& \dto^{(g|_{y}h )|_{z} }
\\
&
&  \rto^{z} 
&  \dto^{k}
\\
&
&
&
\enddiagram$$
resulting in the product
$$(x(g \cdot y)[(g|_{y}h) \cdot z],(g|_{y}h)|_{z}k).$$
By assumption, 
$x(g \cdot y)[(g|_{y}h) \cdot z]$ exists and so
$(g \cdot y)[(g|_{y}h) \cdot z]$ exists.
We now use (SS8) and (SS4) and (SS7), 
to get that $y(h \cdot z)$ exists
and we use (SS7) and (SS6) to show that
$$(g|_{y}h)|_{z}k = g|_{y(h \cdot z)}h|_{z}k.$$
By (SS2),
$$x(g \cdot y)[(g|_{y}h) \cdot z]
=
(x(g \cdot y))(g|_{y} \cdot (h \cdot z)).$$
It now follows that
$$(y,h)(z,k) = (y(h \cdot z),h|_{z}k)$$
exists.
It also follows that 
$(x,g)[(y,h)(z,k)]$ exists
and is equal to 
$$[(x,g)(y,h)](z,k).$$
Next suppose that 
$$(x,g)[(y,h)(z,k)]$$
exists.
This multiplies out to give
$(x[g \cdot (y(h \cdot z))], g|_{y(h \cdot z)}h|_{z}k)$.
By (SS6) and (SS7) we get that
$$g|_{y(h \cdot z)}h|_{z}k
=
(g|_{y}h)|_{z}k,$$
and by (SS8) and (SS2) we get that
$x[g \cdot (y(h \cdot z))]
=
x(g \cdot y)[(g|_{y}h) \cdot z]$.
This completes the proof that 
$C \bowtie G$ is a category.

(2) Define $\iota_{C} \colon \: C \rightarrow C \bowtie G$ by $\iota_{C}(x) = (x,\ran (x))$.
This is well-defined.
Suppose that $xy$ exists.
Then in particular
$\ran (x) = \dom (y)$.
It is easy to check using (SS4) and (SS1)
that $\iota_{C}(x)\iota_{C}(y) = \iota_{C}(xy)$.
In fact, $\iota_{C}(x)\iota_{C}(y)$ exists iff $xy$ exists.
Thus the categories $C$ and $C'$ are isomorphic.

Now define $\iota_{G} \colon \: G \rightarrow C \bowtie G$
by $\iota_{G}(g) = (\dom (g)  ,g)$.
Then once again the categories $G$ and $G'$ are isomorphic.

Finally, pick an arbitrary non-zero element $(x,g)$.
Then $\ran (x) = \dom (g)$.
We may write
$(x,g) = (x,\ran (x))(\dom (g) ,g)$
using the fact that 
$\ran (x) \cdot \dom (g) = \dom (g)$ by (SS1)
and
$\ran (x)|_{\dom (g)} = \dom (g)$ by (SS4).

(3) Suppose that $C$ is left cancellative.
We prove that $C \bowtie G$ is left cancellative.
Suppose that $(x,g)(y,h) = (x,g)(z,k)$.
Then $x(g \cdot y) = x(g \cdot z)$ and $g|_{y}h = g|_{z}k$.
By left cancellation in $C$ it follows that $g \cdot y = g \cdot z$
and by (SS1) we deduce that $y = z$.
Hence $h = k$.
We have therefore proved that $(y,h) = (z,k)$, as required.

(4) We know by (3), that the resulting category is left cancellative.
Suppose that $(x,g)$ is invertible.
Then there is an element $(y,h)$ such that $(\dom (x), \dom (x)) = (x,g)(y,h)$.
In particular, $\dom (x) = x (g \cdot y)$ and so $x$ is invertible.
Conversely, if $x$ is invertible, it can be verified that
$$(x,g)^{-1} = (g^{-1} \cdot x^{-1}, (g|_{g^{-1} \cdot x^{-1}})^{-1}).$$

(5). Suppose that $x$ is an atom.
Let $(x,g) = (u,h)(v,k)$.
Then $x = u(h \cdot v)$ and $g = h|_{v}k$.
If $u$ is invertible then $(u,h)$ is invertible, whereas if $h \cdot v$ is invertible then $v$ is invertible and so
$(v,k)$ is invertible.
It follows that $(x,g)$ is an atom.
To prove the converse, suppose that $(x,g)$ is an atom.
It is immediate that $x$ is not invertible.
Suppose that $x$ is not an atom.
Then we may write $x = uv$ where neither $u$ nor $v$ is invertible.
But then $(x, \ran (x)) = (u, \ran (u))(v, \ran (x))$
and this leads to a non-trivial factorization of $(x,g)$, which is a contradiction.

(6) Suppose now that $C$ is left cancellative and right rigid.
By (3), we know that $C \bowtie G$ is left cancellative 
so it only remains to be proved that $C \bowtie G$ is right rigid.
Suppose that
$$(x,g)(y,h) = (u,k)(v,l)$$
From the definition of the product it follows that
$x(g \cdot y) = u(k \cdot v)$
and
$g|_{y}h = k|_{v}l$.
From the first equation, we see that $xC \cap uC \neq \emptyset$.
Without loss of generality, suppose that $x = uw$.
Then by left cancellation $w(g \cdot y) = k \cdot v$.
Observe that $k^{-1} \cdot (k \cdot v)$ is defined
and so $k^{-1} \cdot (w(g \cdot y))$ is defined by (SS2).
Thus by (SS8), $k^{-1} \cdot w$ is defined.
It is now easy to check that
$$(x,g) = (u,k)(k^{-1} \cdot w, (k|_{k^{-1} \cdot w})^{-1}g).$$

(7) Let $C$ be a left Rees category.
It is enough to prove that $C \bowtie G$ is equipped with a length function.
Let the length function on $C$ be denoted by $\lambda$.
Let $(x,g) \in C \bowtie G$.
Define $\mu (x,g) = \lambda (x)$.
By the above, $\mu (x,g) = 0$ if and only if $(x,g)$ is invertible,
and $\mu (x,g) = 1$ if and only if $x$ is an atom.
The fact that $\mu$ is a functor follows from the fact that $\lambda (g \cdot y) = \lambda (y)$.
\end{proof}

We call $C \bowtie G$ the {\em Zappa-Sz\'ep product} of the category $C$ by the groupoid $G$.

Let $C$ be a left Rees category.
A transversal of the generators of the submaximal principal right ideals is called a {\em basis} for the category.
From Section~3, a basis is therefore a subset of the set of atoms of $C$. 

\begin{theorem} A category is a left Rees category if and only if it is isomorphic to the
Zappa-Sz\'ep product of a free category by a groupoid.
\end{theorem}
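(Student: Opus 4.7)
The backward direction is essentially already done: free categories are left Rees categories by the earlier example, and by part~(7) of the preceding Proposition, the Zappa--Sz\'ep product $F \bowtie G$ of a free category $F$ by a groupoid $G$ acting self-similarly on $F$ is again a left Rees category.

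For the forward direction, let $C$ be a left Rees category, let $G$ be its groupoid of invertible elements, and fix a basis $X$: a transversal that picks one generator from each submaximal principal right ideal of $C$. By Lemma~\ref{le: atomic_decomposition}(1) every such generator is an atom, and since any two atoms $a, b$ with $aC = bC$ satisfy $a = bg$ for some $g \in G$ (by Lemma~\ref{le: greensrelations}(1) applied inside a left cancellative category), every atom of $C$ has a \emph{unique} factorization $a = xg$ with $x \in X$ and $g \in G$. Identify $G_{o}$ with $C_{o}$, and let $F = X^{\ast}$ be the free category on the directed graph $(C_{o}, X)$. I plan to define the self-similar action first on generators: given $g \in G$ and $x \in X$ with $\exists gx$ in $C$, Lemma~\ref{le: atoms_closed_under_groupoid_products} says $gx$ is an atom of $C$, so the transversal property produces unique $y \in X$ and $h \in G$ with $gx = yh$; set $g \cdot x := y$ and $g|_{x} := h$. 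Extend to arbitrary paths by reading (SS6) and (SS8) as recursive definitions, with base clauses $g \cdot e = \dom (g)$ and $g|_{e} = g$ for identities $e$.

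Once the action is in place, the isomorphism $\varphi \colon C \to F \bowtie G$ is the normal-form map. Given a non-invertible $c \in C$, use Lemma~\ref{le: atomic_decomposition}(3) to write $c = a_{1} \ldots a_{m}$, factor $a_{1} = x_{1}g_{1}$ with $x_{1} \in X$, then $g_{1}a_{2} = x_{2}g_{2}$, and continue inductively to obtain a normal form $c = x_{1}x_{2} \ldots x_{m}g_{m}$ with $x_{i} \in X$ and $g_{m} \in G$. Set $\varphi(c) = (x_{1} \cdot \ldots \cdot x_{m},\, g_{m})$, and $\varphi(g) = (\dom(g),\, g)$ for $g \in G$. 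Independence of the chosen atomic decomposition follows from Lemma~\ref{le: uniqueness}(2): when two decompositions of $c$ are compared via the interleaving isomorphisms, the transversal property of $X$ forces the successively extracted $x_{i}$ to coincide. Functoriality matches the Zappa--Sz\'ep multiplication because iteratively applying (SS8) to $g_{m} y_{1} \ldots y_{n}$ yields $(g_{m} \cdot (y_{1} \cdot \ldots \cdot y_{n}))\, g_{m}|_{y_{1} \cdot \ldots \cdot y_{n}}$, exactly the first coordinate of $(x_{1} \cdot \ldots \cdot x_{m}, g_{m})(y_{1} \cdot \ldots \cdot y_{n}, h_{n})$. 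A two-sided inverse of $\varphi$ is $(p, g) \mapsto p \cdot g$ evaluated in $C$, so bijectivity reduces to uniqueness of the normal form.

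The main obstacle I expect is verifying that my recursive extension of $(g, p) \mapsto g \cdot p$ and $(g, p) \mapsto g|_{p}$ really satisfies all of (SS1)--(SS8), especially (SS2) and (SS7), which demand compatibility between iterated left actions and iterated restrictions along composite paths. These checks are not deep but must be carried out by simultaneous induction on $\lambda(p)$, at each step invoking the uniqueness clause of Lemma~\ref{le: uniqueness} to identify the interleaving isomorphism with the corresponding restriction $g|_{x_i}$. Once they are in place, the rest --- including that $F \bowtie G$ is itself a left Rees category, already handled by part~(7) of the preceding Proposition --- follows formally.
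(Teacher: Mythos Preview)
Your proposal is correct and follows essentially the same route as the paper: choose a basis $X$, establish the unique normal form $c = x_{1}\cdots x_{m}\,g$ with $x_{i}\in X$ and $g\in G$, and read off the Zappa--Sz\'ep structure from that. The paper's own proof is a two-line sketch that identifies the subcategory generated by $X$ inside $C$ as free, writes $C = X^{\ast}G$ uniquely, and then (implicitly) invokes the very next theorem --- the abstract statement that a category with a unique $AB$-factorization is a Zappa--Sz\'ep product --- to conclude; you unfold that same content by hand, defining $g\cdot x$ and $g|_{x}$ explicitly and verifying (SS1)--(SS8) by induction. One small slip: when you argue that $aC=bC$ forces $a=bg$, you should cite part~(2) of Lemma~\ref{le: greensrelations} (the $\mathscr{R}$-relation), not part~(1).
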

\begin{proof} We shall sketch the proof.
Let $C$ be the left Rees category.
Choose a basis $X$ for $C$.
Every element of $C$ can be written uniquely
as a product of elements of $X$ followed by an invertible element.
The subcategory $X^{\ast}$ generated by $X$ is free.
Thus $C = X^{\ast}G$.
The Zappa-Sz\'ep product representation then readily follows.
\end{proof}

The following theorem describes the precise circumstances under which Zappa-Sz\'ep products arise.

\begin{theorem} Let $A$ be a category with subcategories $B$ and $C$.
We suppose that $A_{o} = B_{o} = C_{o}$ and that $C = AB$, uniquely.
Then $C$ is isomorphic to the Zappa-Sz\'ep product $A \bowtie B$.
\end{theorem}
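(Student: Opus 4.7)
The plan is to use the hypothesis of unique factorization $C = AB$ to construct a self-similar action of $B$ on $A$ in the sense of the previous section, then to verify that the factorization map $(a,b) \mapsto ab$ is an isomorphism $A \bowtie B \to C$.

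First, for $b \in B$ and $a \in A$ with $\ran(b) = \dom(a)$, the product $ba$ lies in $C$ (it admits the factorization $ba = (ba) \cdot \ran(a)$ with $ba \in A$ and $\ran(a) \in B$), so by uniqueness it factors as $ba = a'b'$ for unique $a' \in A$, $b' \in B$. I would define $b \cdot a := a'$ and $b|_a := b'$, so that the defining relation of the action reads
\[
ba \;=\; (b \cdot a)(b|_a).
\]
The source/target conditions (C1)--(C3) then follow by comparing $\dom$ and $\ran$ on the two sides of this relation.

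Next, I would verify (SS1)--(SS8) by expanding a single arrow of $C$ in two distinct ways as an $A$-followed-by-$B$ product and applying uniqueness. For (SS2) and (SS7) together, expand $(gh)a$ as
\[
((gh)\cdot a)((gh)|_a) \;=\; (gh)a \;=\; g(ha) \;=\; g(h\cdot a)(h|_a) \;=\; (g\cdot(h\cdot a))(g|_{h\cdot a})(h|_a),
\]
and read off both axioms at once, noting that the product of the last two factors lies in $B$. For (SS6) and (SS8), the analogous expansion of $g(aa')$ yields
\[
(g\cdot(aa'))(g|_{aa'}) \;=\; g(aa') \;=\; (g\cdot a)(g|_a)a' \;=\; (g\cdot a)\bigl((g|_a)\cdot a'\bigr)\bigl((g|_a)|_{a'}\bigr),
\]
and uniqueness again gives both axioms simultaneously. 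The remaining axioms (SS1), (SS3)--(SS5) concern actions involving identities and follow from the trivial factorizations $a = \dom(a)\cdot a = a\cdot\ran(a)$ and $e = e\cdot e$ combined with uniqueness.

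Finally, I would define $\Phi \colon A\bowtie B \to C$ by $\Phi(a,b) := ab$. The hypothesis that each element of $C$ factors uniquely as $ab$ is precisely the statement that $\Phi$ is a bijection on arrows, and $\Phi$ evidently preserves identities. Preservation of composition is the computation
\[
\Phi\bigl((a,b)(a',b')\bigr) \;=\; \Phi\bigl(a(b\cdot a'),\,b|_{a'}b'\bigr) \;=\; a(b\cdot a')\,b|_{a'}\,b' \;=\; a(ba')b' \;=\; (ab)(a'b') \;=\; \Phi(a,b)\Phi(a',b'),
\]
in which the middle equality invokes the defining relation of the action. The chief obstacle I anticipate is bookkeeping: eight axioms to verify, but each reduces to a one-line uniqueness argument once the defining relation $ba = (b\cdot a)(b|_a)$ is in hand.
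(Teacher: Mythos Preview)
Your approach is essentially identical to the paper's: define $b\cdot a$ and $b|_a$ via the unique factorization $ba = (b\cdot a)(b|_a)$, derive (SS1)--(SS8) by expanding the associativity identities $b_1(b_2a) = (b_1b_2)a$ and $b(a_1a_2) = (ba_1)a_2$ and the identity relations, then check that $(a,b)\mapsto ab$ is an isomorphism. The only cosmetic difference is that you write the isomorphism as $A\bowtie B \to C$ while the paper writes its inverse $C \to A\bowtie B$.
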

\begin{proof} We sketch out the proof.
Suppose that $ba$ is defined where $b \in B$ and $a \in A$.
Then $ba = a'b'$ for uniquely determined elements $a' \in A$ and $b' \in B$.
We define
$$ba = (b \cdot a) b|_{a}.$$
It is immediate that axioms (C1), (C2) and (C3) hold.
The equality $a = \dom (a)a$ yields the axioms (SS1) and (SS4);
the equality $b = b\ran (b)$ yields axioms (SS3) and (SS5);
the equality $b_{1}(b_{2}a) = (b_{1}b_{2})a$ yields axioms (SS2) and (SS7);
the equality $b(a_{1}a_{2}) = (ba_{1})a_{2}$ yields axioms (SS6) and (SS8).
We may therefore construct the category $A \bowtie B$.
We define a map $C \rightarrow A \bowtie B$ by $c \mapsto (a,b)$ if $c = ab$.
It is now straightforward to check that this determines an isomorphism of categories.
\end{proof}

The following result shows that the theory in this paper can be regarded as
a generalization of the theory of free categories.

\begin{theorem} The free categories are precisely the left Rees categories with a trivial groupoid of
invertible elements and precisely the equidivisible categories with length functors having trivial groupoids of invertible elements.
\end{theorem}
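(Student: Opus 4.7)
The plan is to establish the string of inclusions $\text{(A)} \subseteq \text{(B)} \subseteq \text{(C)} \subseteq \text{(A)}$, where (A) denotes the class of free categories, (B) denotes the class of left Rees categories with trivial groupoid of invertible elements, and (C) denotes the class of equidivisible categories equipped with a length functor and with trivial groupoid of invertible elements.

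The inclusion (A) $\subseteq$ (B) combines the Example above (free categories are left Rees) with a one-line length argument: in any category equipped with a length functor, if $x$ has inverse $y$ then (LF1) gives $\lambda(x) + \lambda(y) = 0$, so $\lambda(x) = 0$ and (LF2) forces $x$ to be an identity. The inclusion (B) $\subseteq$ (C) is immediate from Proposition~\ref{prop: fred}, which equips every left Rees category with a length functor.

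For the main direction (C) $\subseteq$ (A), let $C$ belong to (C). If $C$ has no noninvertible element then $C = C_{0}$, which is the free category on the edgeless graph; otherwise $C$ is a Levi category. I would form the directed graph $D$ with vertex set $C_{0}$ and edge set the set of atoms of $C$, with source and target inherited from $\dom$ and $\ran$ in $C$, and then define a functor $\Phi \colon D^{\ast} \to C$ which is the identity on identities and sends a nonempty path of edges $a_{1} \cdot \ldots \cdot a_{n}$ to the product $a_{1} \cdots a_{n}$ in $C$. The composability condition in $D^{\ast}$ precisely matches $\ran(a_{i}) = \dom(a_{i+1})$, so $\Phi$ is a well-defined functor. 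Surjectivity of $\Phi$ follows from Lemma~\ref{le: atomic_decomposition}(3) applied to noninvertible elements, together with the hypothesis that the only invertible elements of $C$ are identities (which are the images of the empty paths). For injectivity, I would take two paths $a_{1} \cdot \ldots \cdot a_{m}$ and $b_{1} \cdot \ldots \cdot b_{n}$ with equal image in $C$; Lemma~\ref{le: uniqueness}(1) gives $m = n$, and part (2) of that lemma produces invertibles $g_{1}, \ldots, g_{n-1}$ with $a_{1} = b_{1} g_{1}$, $a_{i} = g_{i-1}^{-1} b_{i} g_{i}$, and $a_{n} = g_{n-1}^{-1} b_{n}$. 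Under the hypothesis each $g_{i}$ is an identity, and the existence of the category product $b_{i} g_{i}$ then forces $g_{i} = \ran(b_{i}) = \dom(b_{i+1})$, so every formula collapses to $a_{i} = b_{i}$.

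I expect the main obstacle to be this final identity-chase: one must unwind the formulas from Lemma~\ref{le: uniqueness}(2) under the trivial-invertibles hypothesis and use the domain/codomain book-keeping in $C$ to show that each intermediary $g_{i}$ is pinned down to the appropriate endpoint identity, so that the chain of equations reduces to termwise equality of the two paths. Everything else in the argument is either a direct appeal to earlier results or a formal verification.
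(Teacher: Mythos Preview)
Your argument is correct, with one small slip: in the (A) $\subseteq$ (B) step, the appeal to (LF2) does not yield what you claim, since (LF2) only says that length-zero elements are \emph{invertible}, not that they are identities. This is harmless, because the Example already records explicitly that free categories have a trivial groupoid of invertible elements, so you can simply cite that.

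Your main direction (C) $\subseteq$ (A) takes a somewhat different route from the paper. The paper argues only that an equidivisible category with a length functor and trivial invertibles is \emph{left cancellative} (via Lemma~\ref{le: uniqueness}), whence it is a left Rees category by Proposition~\ref{prop: fred}; freeness then follows from the Zappa--Sz\'ep structure theorem immediately preceding, since the Zappa--Sz\'ep product by a trivial groupoid is just the free category. You bypass the structure theorem and establish freeness directly by exhibiting the isomorphism $\Phi \colon D^{\ast} \to C$, using Lemma~\ref{le: atomic_decomposition}(3) for surjectivity and Lemma~\ref{le: uniqueness} for injectivity. Both arguments rest on the same observation --- that the interleaving invertibles $g_{i}$ in Lemma~\ref{le: uniqueness}(2) collapse to identities under the hypothesis --- but yours is self-contained, while the paper's is shorter given the machinery already in place.
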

\begin{proof} We need only prove that an 
equidivisible category with length functor having a trivial groupoid of invertible elements
is left cancellative.
But this essentially follows from Lemma~\ref{le: uniqueness}. 
\end{proof}

\begin{remark} {\em Cancellative equidivisible monoids with trivial groups of units do not have to be free;
see, for example, Example~1.8 of Chapter~5 of \cite{Lallement}.
This example also suggests that studying equidivisible categories with more general kinds of length functors may be interesting.}
\end{remark}

\section{Diagrams of partial homomorphisms}

{\em From now on, the groupoids involved in bimodules will always be totally disconnected.}

We showed in the previous section, that covering bimodules are the building blocks on left Rees monoids.
In this section, we want to describe how covering bimodules may be constructed.
This generalizes a construction due to Nekeshevych in the case where the acting groupoid is actually a group \cite{N1}.

Let $D$ be a directed graph.
An edge $x$ from the vertex $e$ to the vertex $f$ will be written $e \stackrel{x}{\rightarrow} f$.
With each vertex $e$ of $D$ we associate a group $G_{e}$, called the {\em vertex group},
and with each edge $e \stackrel{x}{\rightarrow} f$,
we associate a surjective homomorphism $\phi_{x} \colon (G_{e})^{+}_{x} \rightarrow (G_{f})^{-}_{x}$
where $(G_{e})^{+}_{x} \leq G_{e}$ and $(G_{f})^{-}_{x} \leq G_{f}$.
In other words, with each edge $e \stackrel{x}{\rightarrow} f$,
we associate a partial homomorphism $\phi_{x}$ from $G_{e}$ to $G_{f}$.
We call this structure a {\em diagram of partial homomorphisms}.
If all the $\phi_{x}$ are {\em isomorphisms} then we shall speak of a  {\em diagram of partial isomorphisms}.
For brevity, we shall say that $D$ is the diagram of partial homomorphisms though of course it is defined
only by the totality of data.

Let $D_{1}$ and $D_{2}$ be two diagrams of partial homomorphisms having the same vertex sets,
and edge sets that differ only in labelling and the same vertex groups.
We say these two diagrams of partial homomorphisms are {\em conjugate},
if for each edge $e \stackrel{x}{\rightarrow} f$ in $D_{1}$ and corresponding edge $e \stackrel{y}{\rightarrow} f$ in $D_{2}$ 
there are inner automorphisms
$\alpha_{x,y} \colon G_{e} \rightarrow G_{e}$ 
and
$\beta_{x,y} \colon G_{f} \rightarrow G_{f}$ 
such that
$\alpha_{x,y}( (G_{e})_{x}^{+}  ) = (G_{e})^{+}_{y}$
and
$\beta_{x,y}( (G_{f})_{x}^{-}  ) = (G_{f})^{-}_{y}$
and
$\beta_{x,y} \phi_{x} = \phi_{y}\alpha_{x,y}$.

\begin{remark}
{\em The above definition could be generalized a little by assuming only that the
underlying graphs were isomorphic.}
\end{remark}

The goal of this section is to prove that covering bimodules and diagrams of partial homomorphisms
are different ways of describing the same object.

Let $(G,X,G)$ be a covering bimodule where $G = \bigcup_{e \in V} G_{e}$.
Define a relation $\mathscr{C}$ on $X$ by $x \, \mathscr{C} \, \, y$ if and only if
$y = gxh$ for some $g,h \in G$.\footnote{This relation was used by Paul Cohn whence the choice of notation.}
Observe that because $G$ is a disjoint union of groups if $x \, \mathscr{C} \, \, y$
then $x$ and $y$ are parallel.
Clearly, $\mathscr{C}$ is an equivalence relation.

\begin{lemma}\label{le: bimodule_to_diagram} Let $(G,X,G)$ be a covering bimodule where $G = \bigcup_{e \in V} G_{e}$.
\begin{enumerate}

\item With each transversal of the $\mathscr{C}$-classes, we may associate a diagram of partial homomorphisms.

\item Different transversals yield conjugate diagrams of partial homomorphisms.

\item The bimodule is bifree if and only if the associated diagram is a diagram of partial isomorphisms.

\end{enumerate}
\end{lemma}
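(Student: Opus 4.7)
For part (1), I would fix a transversal $T$ of the $\mathscr{C}$-classes and, for each $x \in T$ with $\mathbf{s}(x) = e$ and $\mathbf{t}(x) = f$, set
$$(G_{e})^{+}_{x} = \{g \in G_{e} \colon gx = xh \text{ for some } h \in G_{f}\}.$$
The witnessing $h$ is unique: if $xh = xh'$ with $h, h' \in G_{f}$, then $x(h(h')^{-1}) = (xh)(h')^{-1} = x$, and the covering (right-free) hypothesis forces $h(h')^{-1}$ to be the identity $f$. So $\phi_{x}(g) := h$ is a well-defined function on $(G_{e})^{+}_{x}$. Associativity $(gx)h = g(xh)$ together with the group operations on $G_{e}$ and $G_{f}$ shows that $(G_{e})^{+}_{x}$ is a subgroup of $G_{e}$ and $\phi_{x}$ a homomorphism onto $(G_{f})^{-}_{x} := \phi_{x}((G_{e})^{+}_{x}) \leq G_{f}$. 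The associated diagram then has vertex set $V$, vertex groups $(G_{e})_{e \in V}$, and one edge $e \stackrel{x}{\rightarrow} f$ labelled by $\phi_{x}$ for each $x \in T$.

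For part (2), given two transversals $T$ and $T'$ and, for each $\mathscr{C}$-class, the corresponding representatives $x \in T$ and $y \in T'$, I would fix $g \in G_{e}$ and $h \in G_{f}$ with $y = gxh$ (which exists by definition of $\mathscr{C}$) and set
$$\alpha_{x,y}(a) = gag^{-1}, \qquad \beta_{x,y}(b) = h^{-1}bh.$$
Both are inner automorphisms of $G_{e}$ and $G_{f}$ respectively. The equation $ay = yb$ unfolds as $agxh = gxhb$, and right-cancelling $h$ (legitimate since right-freeness is easily seen to entail right cancellation of the action) then left-multiplying by $g^{-1}$ rewrites it as $(g^{-1}ag)x = x(hbh^{-1})$. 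This single computation delivers simultaneously $\alpha_{x,y}((G_{e})^{+}_{x}) = (G_{e})^{+}_{y}$, $\beta_{x,y}((G_{f})^{-}_{x}) = (G_{f})^{-}_{y}$, and the intertwining $\phi_{y}\alpha_{x,y} = \beta_{x,y}\phi_{x}$ required by the definition of conjugate diagrams.

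For part (3), if the bimodule is bifree then $\phi_{x}(g) = f$ means $gx = xf = x$, which forces $g = e$ by left-freeness, so each $\phi_{x}$ is injective and hence, being already surjective, an isomorphism. Conversely, assume every $\phi_{x}$ ($x \in T$) is an isomorphism and suppose $ax = x$ for an arbitrary $x \in X$; write $x = g_{0}yh_{0}$ with $y \in T$. The relation $(ag_{0})yh_{0} = g_{0}yh_{0}$ becomes, after right-cancelling $h_{0}$ and left-multiplying by $g_{0}^{-1}$, the equation $(g_{0}^{-1}ag_{0})y = y$. Thus $g_{0}^{-1}ag_{0} \in (G_{\mathbf{s}(y)})^{+}_{y}$ is sent by $\phi_{y}$ to the identity of $G_{\mathbf{t}(y)}$; injectivity forces $g_{0}^{-1}ag_{0}$, and hence $a$, to be an identity, proving left-freeness. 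The main obstacle I anticipate is purely notational: keeping straight whether each juxtaposition is a product inside $G$ or an action of $G$ on $X$, and verifying that the associativity axiom $(gx)h = g(xh)$ and the right-cancellation derived from right-freeness are applied to elements whose sources and targets really match. Once those conventions are pinned down, each verification above is a one-line manipulation.
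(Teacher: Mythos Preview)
Your proposal is correct and follows essentially the same route as the paper: the same definitions of $(G_{e})^{+}_{x}$ and $\phi_{x}$, the same conjugation computation for part~(2), and the same kernel argument for part~(3). You supply slightly more detail than the paper does (the explicit converse in~(3), and the well-definedness check in~(1)); your parenthetical about right-freeness implying right cancellation is harmless but unnecessary, since cancelling an $h \in G_{f}$ is just acting by $h^{-1}$.
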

\begin{proof} (1). Choose a transversal $E$ of the $\mathscr{C}$-classes.
Define the directed graph $D$ to have as vertices the set of identities of $G$
and edges the elements of $E$.
The group associated with the vertex $e$ is the group $G_{e}$.
Consider now the edge $e \stackrel{x}{\rightarrow} f$.
We define
$$(G_{e})^{+}_{x} = \{g \in G_{e} \colon gx = xh \mbox{ for some } h \in G_{f} \}$$
and 
$$(G_{f})^{-}_{x} = \{h \in G_{f} \colon gx = xh \mbox{ for some } g \in G_{e} \}$$
and
$\phi_{x}(g) = h$ if $g \in (G_{e})^{+}_{x}$ and $gx = xh$ where $h \in G_{f}$.
In a covering bimodule the right action is free and so $\phi_{x}$ is a function and,
in fact, a homomorphism.
We have therefore constructed a diagram of partial homomorphisms.

(2). Let $E'$ be another transversal.
We denote the elements of $E$ by $x_{i}$ where $i \in I$ and the elements of $E'$ by $y_{i}$ where $i \in I$
and assume that $x_{i} \, \mathscr{C} \, y_{i}$.
Choose $g_{i},h_{i} \in G$ such that $x_{i} = g_{i}y_{i}h_{i}$.
Let $e \stackrel{x_{i}}{\rightarrow} f$
and
$g \in (G_{e})_{x_{i}}^{+}$.
Then
$$(g_{i}^{-1}gg_{i})y_{i} = y_{i}(h_{i} \phi_{x_{i}}(g)h_{i}^{-1}).$$
Thus
$$h_{i}\phi_{x_{i}}(g)h_{i}^{-1} = \phi_{y_{i}}(g_{i}^{-1}gg_{i}).$$
Define inner automorphisms 
$\beta_{i}(-) = h_{i} - h_{i}^{-1}$ 
and
$\alpha_{i}(-) = g_{i}^{-1} - g_{i}$
where $\alpha_{i} \colon G_{e} \rightarrow G_{e}$
and
$\beta_{i} \colon G_{f} \rightarrow G_{f}$.
We therefore have $\beta_{i} \phi_{x_{i}} = \phi_{y_{i}} \alpha_{i}$
and
$\alpha_{i}((G_{e})_{x_{i}}^{+}) = (G_{e})_{y_{i}}^{+}$
and
$\beta_{i}((G_{f})_{x_{i}}^{-}) = (G_{f})_{y_{i}}^{-}$.
Thus the two diagrams are conjugate

(3). Suppose that the bimodule is bifree.
By definition $gx = x \phi_{x}(g)$ when $g \in G_{x}^{+}$.
Suppose that $g_{1}x = g_{2}x$.
Then $x = (g_{1}^{-1}g_{2})x$ and so by left freeness we have that $g_{1}^{-1}g_{2}$ is an identity and so
$g_{1} = g_{2}$.
Thus $\phi_{x}$ is injective and so an isomorphism.
The proof of the converse is straightforward.
\end{proof}

We now show how to go in the opposite direction.
Let $D$ be a diagram of partial homomorphisms.
Let $G = \bigcup_{e \in V} G_{e}$ be the disjoint union of the vertex groups regarded as a groupoid.
Denote the set of edges by $E$.
Let $G \ast E \ast G$ be the set of triples $(g,x,h)$
where $e \stackrel{x}{\rightarrow} f$ and $g \in G_{e}$ and $h \in G_{f}$.
We define a relation $\equiv$ on the set $G \ast E \ast G$ as follows:
$(g_{1},x_{1},h_{1}) \equiv (g_{2},x_{2},h_{2})$ 
if and only if
$x_{1} = x_{2} = x$, say,
$g_{2}^{-1}g_{1} \in (G_{e})_{x}^{+}$ and
$\phi_{x}(g_{2}^{-1}g_{1}) = h_{2}h_{1}^{-1}$.

\begin{lemma}\label{le: diagram_to_bimodule} With the above definition, $\equiv$ is an equivalence relation.
Denote the $\equiv$-class containing $(g,x,h)$ by $[g,x,h]$.
Then we get a covering bimodule $\mathsf{B}(D)$ when we define
$g[g_{1},x,h_{1}] = [gg_{1},x,h_{1}]$ when $\exists gg_{1}$ 
and
$[g_{1},x,h_{1}]h = [g_{1},x,h_{1}h]$ when $\exists h_{1}h$.
The diagram of partial homomorphisms associated with this covering module is conjugate to $D$.
\end{lemma}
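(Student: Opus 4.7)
The plan is to verify each claim in sequence, with the main technical content being the equivalence-relation check and the bimodule axioms, and then to use an especially convenient transversal to recover $D$ on the nose.

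First I would verify that $\equiv$ is an equivalence relation. Reflexivity is immediate since $g^{-1}g$ is the identity of $G_e$, which lies in $(G_e)_x^+$, and $\phi_x$ sends it to the identity of $G_f$, which equals $hh^{-1}$. For symmetry, observe that if $g_2^{-1}g_1 \in (G_e)_x^+$ and $\phi_x(g_2^{-1}g_1) = h_2 h_1^{-1}$, then $g_1^{-1}g_2 = (g_2^{-1}g_1)^{-1}$ is also in the subgroup $(G_e)_x^+$, and since $\phi_x$ is a group homomorphism, $\phi_x(g_1^{-1}g_2) = (h_2 h_1^{-1})^{-1} = h_1 h_2^{-1}$. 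Transitivity is the same idea: write $g_3^{-1}g_1 = (g_3^{-1}g_2)(g_2^{-1}g_1)$ and use that $\phi_x$ is a homomorphism.

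Next I would check that the proposed left and right actions are well-defined on $\equiv$-classes. If $(g_1,x,h_1) \equiv (g_2,x,h_2)$ and $g$ is composable on the left, then $(gg_2)^{-1}(gg_1) = g_2^{-1}g_1$, so the witnessing element in $(G_e)_x^+$ is the same, and the $h$-component is unchanged; hence $(gg_1,x,h_1) \equiv (gg_2,x,h_2)$. The right-action case is symmetric, using that $(h_1 h)(h_2 h)^{-1} = h_1 h_2^{-1}$. With well-definedness in hand, the groupoid action axioms and the bimodule associativity $(g[g_1,x,h_1])h = g([g_1,x,h_1]h)$ are immediate from associativity in $G$. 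The domain/codomain maps $\mathbf{s}[g,x,h] = \mathbf{d}(x)$ and $\mathbf{t}[g,x,h] = \mathbf{r}(x)$ are also well-defined since all representatives share the same edge $x$.

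Then I would establish that $\mathsf{B}(D)$ is a \emph{covering} bimodule, i.e.\ the right action is free. Suppose $[g_1,x,h_1]h = [g_1,x,h_1]$, so $(g_1,x,h_1 h) \equiv (g_1,x,h_1)$. The defining conditions force $g_1^{-1}g_1 = 1 \in (G_e)_x^+$ (automatic) and $\phi_x(1) = h_1(h_1 h)^{-1} = h^{-1}$. Thus $h = 1$, which is an identity of $G$, so the action is right free.

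Finally, to compare diagrams, I would take the transversal $E' = \{[1_e, x, 1_f] : (e \xrightarrow{x} f) \in E\}$ for the $\mathscr{C}$-classes. Any two classes $[g,x,h]$ and $[g',x',h']$ with $x = x'$ satisfy $g'[g,x,h](h^{-1}h') = g'[g,x,h']$, and since also $[g',x,h'] = g'g^{-1}[g,x,h']$, we see $\mathscr{C}$-equivalence is controlled exactly by equality of the edge label $x$. Hence $E'$ is a genuine transversal. Now I would compute the diagram of partial homomorphisms produced by Lemma~\ref{le: bimodule_to_diagram} from this transversal: the condition $g \cdot [1_e,x,1_f] = [1_e,x,1_f] \cdot h$ becomes $[g,x,1_f] = [1_e,x,h]$, which unwinds to $g \in (G_e)_x^+$ and $\phi_x(g) = h$. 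So the reconstructed subgroups and homomorphisms coincide with those of $D$ itself. The resulting diagram is thus identically equal to $D$, and in particular conjugate to $D$ (via the trivial inner automorphisms).

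The main potential obstacle is bookkeeping: ensuring that every instance of an inverse-times-element expression actually lies in the subgroup $(G_e)_x^+$ so that $\phi_x$ can be applied, and checking that the homomorphism identities of $\phi_x$ are enough to push the equivalence relation through transitivity and the action axioms. Once one is careful that $(G_e)_x^+$ is a \emph{subgroup} (so closed under products and inverses) and that $\phi_x$ is a group homomorphism, every step reduces to a short algebraic identity.
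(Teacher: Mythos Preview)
Your proposal is correct and follows exactly the route the paper takes: the paper dismisses the equivalence-relation, well-definedness, and right-freeness checks as routine and only addresses the final claim, choosing the same transversal $\{[e,x,f]\}=\{[1_e,x,1_f]\}$ you do and observing that the recovered diagram equals $D$ on the nose. Your write-up simply supplies the routine details the paper omits; there is no substantive difference in strategy.
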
 
\begin{proof} We shall just prove the last part since the other proofs are routine.
We choose the transversal $[e,x,f]$ where $e \stackrel{x}{\rightarrow} f$ is an edge of the diagram $D$.
It is now easy to check that we get back exactly the digram of partial homomorphisms we started with.
\end{proof}

\begin{lemma}\label{le: fred} Let $(G,X,G)$ be a covering bimodule, let $E$ be a transversal of the $\mathscr{C}$-classes
and let $D$ be the associated diagram of partial homomomorphisms.
Then the bimodule $X$ is isomorphic to the bimodule $\mathsf{B}(D)$.
\end{lemma}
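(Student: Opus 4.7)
The plan is to construct an explicit bimodule isomorphism $\psi \colon X \to \mathsf{B}(D)$ by using the transversal $E$ to normalize each element of $X$. Since every $\mathscr{C}$-class meets $E$ in exactly one point, each $y \in X$ is $\mathscr{C}$-related to a unique $x \in E$, and so $y = gxh$ for some $g,h \in G$ (with $g \in G_{\mathbf{d}(x)}$ and $h \in G_{\mathbf{r}(x)}$, since $G$ is totally disconnected and parallelism is forced by $\mathscr{C}$). I would then define $\psi(y) = [g,x,h]$.

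The first step is well-definedness, which is the main technical point. Suppose $y = g_{1}xh_{1} = g_{2}xh_{2}$. Then $(g_{2}^{-1}g_{1})x = x(h_{2}h_{1}^{-1})$, which by the very definitions in Lemma~\ref{le: bimodule_to_diagram} says that $g_{2}^{-1}g_{1} \in (G_{e})^{+}_{x}$ and $\phi_{x}(g_{2}^{-1}g_{1}) = h_{2}h_{1}^{-1}$. This is precisely the relation $\equiv$ defining $\mathsf{B}(D)$, so $[g_{1},x,h_{1}] = [g_{2},x,h_{2}]$, and $\psi$ is well-defined.

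The remaining verifications are routine. The bimodule-morphism axioms $\psi(gy) = g\psi(y)$ and $\psi(yh) = \psi(y)h$ follow immediately from the definition of the $G$-actions on $\mathsf{B}(D)$ given in Lemma~\ref{le: diagram_to_bimodule}: if $y = g_{1}xh_{1}$ then $gy = (gg_{1})xh_{1}$, and $\psi$ respects this, and similarly on the right. For injectivity, if $\psi(y_{1}) = \psi(y_{2})$ then $y_{1}$ and $y_{2}$ lie in the same $\mathscr{C}$-class with common representative $x \in E$, and reversing the well-definedness argument recovers $y_{1} = y_{2}$. Surjectivity is transparent: $[g,x,h] = \psi(gxh)$.

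The only real obstacle is the well-definedness step, and it dissolves once one recognises that the relation $\equiv$ on $G \ast E \ast G$ was engineered to capture exactly the ambiguity in writing an element $y \in X$ in the normal form $gxh$; right freeness of the action (which is part of the covering-bimodule hypothesis) is what makes $\phi_{x}$ a well-defined function in the first place, and hence makes this compatibility cleanly available. No new lemmas are needed beyond the definitions already in place.
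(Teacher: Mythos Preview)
Your proposal is correct and follows essentially the same approach as the paper's own proof: define the map by sending an element to the $\equiv$-class of any normal-form decomposition $gxh$ with $x$ in the transversal, verify well-definedness via the computation $(g_{2}^{-1}g_{1})x = x(h_{2}h_{1}^{-1})$, and then note that bijectivity and the bimodule-morphism property are routine. In fact you have written out more of the routine verifications than the paper does, but the map and the key well-definedness step are identical.
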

\begin{proof} Let $x \in X$. Then $x \, \mathscr{C} \, y$ for a unique $y \in E$.
By definition, $x = g_{1}yh_{1}$ for some $g_{1},h_{1} \in G$.
Suppose also that $x = g_{2}yh_{2}$.
Then $g_{2}^{-1}g_{1} y = y h_{2}h_{1}^{-1}$.
It follows that $(g_{1},y,h_{1}) \equiv (g_{2}, y, h_{2})$.
We may therefore define a function
$\theta \colon X \rightarrow \mathsf{B}(D)$ by $\theta (x) = [g_{1},x,h_{1}]$.
It remains to show that this is a bijection and an isomorphism of bimodules
both of which are now straightforward.
\end{proof}

We summarize the results of this section in the following theorem.

\begin{theorem} With each diagram of partial homomorphisms $D$ we may associate a covering bimodule $\mathsf{B}(D)$
and every covering bimodule is isomorphic to one of this form.
Diagrams of partial isomorphisms correspond to bifree covering bimodules.
\end{theorem}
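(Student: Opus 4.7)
The plan is to assemble the three preceding lemmas, which together already contain every ingredient of the theorem. Essentially no new argument should be needed; the task is purely one of bookkeeping, so I expect no real obstacle.

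First I would dispatch the first clause by quoting Lemma~\ref{le: diagram_to_bimodule}: given a diagram of partial homomorphisms $D$, the construction $\mathsf{B}(D)$ defined there produces a well-defined covering bimodule. This is a pure citation, nothing new to verify.

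Next, to establish that every covering bimodule arises this way up to isomorphism, I would start with an arbitrary covering bimodule $(G,X,G)$, choose any transversal $E$ of the $\mathscr{C}$-classes on $X$, and apply Lemma~\ref{le: bimodule_to_diagram}(1) to build a diagram $D$ of partial homomorphisms. Lemma~\ref{le: fred} then supplies an isomorphism $X \cong \mathsf{B}(D)$ of bimodules, which is exactly what is needed. The dependence on the choice of transversal is harmless here since the theorem only asks for isomorphism, not canonicity; Lemma~\ref{le: bimodule_to_diagram}(2) records that different transversals yield conjugate diagrams, so the construction $D \mapsto \mathsf{B}(D)$ is well-defined up to the evident equivalence on diagrams.

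For the final clause, I would invoke Lemma~\ref{le: bimodule_to_diagram}(3) in one direction: if $(G,X,G)$ is bifree then the associated diagram consists of partial isomorphisms. In the other direction, given a diagram $D$ of partial isomorphisms, I would verify that $\mathsf{B}(D)$ is left free by using the explicit description of the equivalence relation $\equiv$: if $[g_1,x,h_1]g = [g_1,x,h_1]$ as in the definition of right freeness, or the dual version for left freeness, then the relation defining $\equiv$ forces the relevant element of the vertex group to map under the (now bijective) $\phi_x$ to an identity, hence to be an identity itself. Right freeness is already built into the construction of $\mathsf{B}(D)$ from Lemma~\ref{le: diagram_to_bimodule}, so only left freeness needs this short extra check, and it is exactly where the hypothesis that $\phi_x$ is an isomorphism (rather than merely a surjection) is used.
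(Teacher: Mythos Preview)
Your proposal is correct and matches the paper's approach exactly: the theorem is presented in the paper as a summary of the section with no separate proof, and you have correctly identified Lemmas~\ref{le: diagram_to_bimodule}, \ref{le: bimodule_to_diagram}, and \ref{le: fred} as the ingredients that assemble into it. One small economy you could make in the final clause: rather than verifying left freeness of $\mathsf{B}(D)$ by hand, you can note (as stated at the end of Lemma~\ref{le: diagram_to_bimodule}) that the diagram associated to $\mathsf{B}(D)$ via the canonical transversal $\{[e,x,f]\}$ is $D$ itself, and then apply the ``if'' direction of Lemma~\ref{le: bimodule_to_diagram}(3) directly; your direct check is of course also valid.
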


\section{Presentations of skeletal left Rees categories}

In this section, we shall prove that every skeletal left Rees category has a presentation of a particular form.
This presentation is then the final link in showing that the theory of graphs of groups is a special
case of the theory of skeletal left Rees categories.

Let $C$ be a skeletal left Rees category.
Then for each atom $x \in eCf$, we have as before the following definitions:
$$(G_{e})_{x}^{+} = \{ g \in G_{e} \colon gx = xh \mbox{ for some } h \in G_{f} \},$$
and
$$(G_{f})_{x}^{-} = \{ h \in G_{f} \colon gx = xh \mbox{ for some } g \in G_{e} \},$$
and
$$\phi_{x} (g) = h \mbox{ if } g \in (G_{e})_{x}^{+} \mbox{ and } h \in (G_{f})_{x}^{-} \mbox{ and } gx = xh.$$
We have that $(G_{e})_{x}^{+} \leq G_{e}$ and $(G_{f})_{x}^{-} \leq G_{f}$ and $\phi_{x}$ is a surjective homomorphism.

\begin{lemma}\label{le: right_cancellative_category} Let $C$ be a skeletal left Rees category.
Then $C$ is right cancellative if and only if all the homomorphisms $\phi_{x}$ defined above are also injective and so in fact isomorphisms.
\end{lemma}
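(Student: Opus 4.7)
The plan is to prove the two implications separately. For the forward direction, assume $C$ is right cancellative and fix an atom $x \in eCf$. If $\phi_{x}(g_{1}) = h = \phi_{x}(g_{2})$ with $g_{1}, g_{2} \in (G_{e})_{x}^{+}$, then by definition $g_{1}x = xh = g_{2}x$, and right-cancelling $x$ yields $g_{1} = g_{2}$. Combined with the surjectivity built into the definition, each $\phi_{x}$ is an isomorphism.

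For the reverse direction, assume each $\phi_{x}$ is injective and suppose $ax = bx$ in $C$; I will show $a = b$ by induction on $\lambda(x)$. If $\lambda(x) = 0$ then $x$ is invertible and $a = (ax)x^{-1} = (bx)x^{-1} = b$. If $\lambda(x) \geq 2$, use Lemma~\ref{le: atomic_decomposition}(2) to factor $x = x_{1}x'$ with $x_{1}$ an atom and $\lambda(x') < \lambda(x)$; the inductive hypothesis applied to $x'$ gives $ax_{1} = bx_{1}$, so it suffices to treat the case that $x$ is itself an atom.

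So suppose $x$ is an atom with $\dom(x) = e$, $\ran(x) = f$, and $ax = bx$. The length functor forces $\lambda(a) = \lambda(b) = m$, say. If $m = 0$, skeletality (total disconnectedness of $G$) gives $a, b \in G_{e}$, and $(a^{-1}b)x = x = x \cdot \phi_{x}(e)$ shows $a^{-1}b \in (G_{e})_{x}^{+}$ with $\phi_{x}(a^{-1}b) = f = \phi_{x}(e)$, whence injectivity yields $a^{-1}b = e$ and $a = b$. If $m \geq 1$, Lemma~\ref{le: atomic_decomposition}(3) provides atomic factorisations $a = a_{1} \cdots a_{m}$ and $b = b_{1} \cdots b_{m}$, so $ax$ and $bx$ are two atomic decompositions, each with $m+1$ factors, of the same element. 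Lemma~\ref{le: uniqueness}(2) then supplies interleaving invertibles $g_{1}, \ldots, g_{m}$ with $a_{1} = b_{1}g_{1}$, $a_{i} = g_{i-1}^{-1}b_{i}g_{i}$ for $2 \leq i \leq m$, and crucially $x = g_{m}^{-1}x$. The last equation says exactly that $g_{m} \in (G_{e})_{x}^{+}$ with $\phi_{x}(g_{m}) = f = \phi_{x}(e)$, so injectivity forces $g_{m} = e$. A straightforward left-to-right telescoping of the interleaving relations then produces $a_{1} \cdots a_{k} = b_{1} \cdots b_{k}g_{k}$ for all $k \leq m$, and setting $k = m$ gives $a = b \cdot e = b$.

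The main obstacle is the final step: the hypothesis of injectivity of $\phi_{x}$ is a purely atomic constraint, yet we must convert it into full right cancellation. The mechanism is the interleaving diagram of Lemma~\ref{le: uniqueness}, which concentrates the ambiguity between two equal atomic decompositions into a single trailing invertible $g_{m}$ sitting in $(G_{e})_{x}^{+}$; this is precisely the piece of data that injectivity of $\phi_{x}$ is designed to control. The remaining verifications (length matching, the telescoping, and the invertible case) are mechanical.
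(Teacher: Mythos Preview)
Your proof is correct, and the converse direction takes a genuinely different route from the paper's. The paper invokes the Zappa--Sz\'ep decomposition $C = X^{\ast}G$ of Section~5: writing $a = xg$, $b = yh$, $c = zk$ in normal form and assuming $ab = cb$, it reduces the problem to showing that $gy = y$ (with $g$ invertible and $y \in X^{\ast}$) forces $g$ to be an identity, and proves this by induction on $\lambda(y)$ using the self-similar action axioms (SS1)--(SS8). You instead stay at the level of the Levi-category structure: inducting on the length of the cancelled factor to reduce to the case of an atom, and then applying the interleaving diagram of Lemma~\ref{le: uniqueness} to two atomic decompositions of the same element so as to concentrate all the discrepancy into a single trailing invertible $g_{m}$, which injectivity of $\phi_{x}$ then kills. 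The paper's approach ties the lemma into the Zappa--Sz\'ep machinery it has just built and so fits the narrative; your approach is more self-contained, needing only the atomic-factorisation and interleaving results of Section~2 and avoiding any choice of basis or use of the action/restriction formalism.
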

\begin{proof} If $C$ is right cancellative, it is immediate that all the homomorphisms $\phi_{x}$ are injective.
We prove the converse using Section~5.
Choose a basis for $C$ so that $C = X^{\ast}G$ where $X^{\ast}$ is a free category and $G$ the groupoid of isomorphisms.
Let $a,b,c \in C$ such that $ab = cb$.
We shall prove that $a = c$.
We have that $a = xg$, $b = yh$ and $c = zk$ where $x,y,z \in X^{\ast}$ and $g,h,k \in G$.
Thus
$$x (g \cdot y)g|_{y} h = z(k \cdot y)k|_{y} h.$$
From length considerations, $x = z$.
We also have that $g|_{y} = k|_{y}$.
It remains only to prove that $g = k$ and we are done.
We know that $gy = ky$.
Thus $(g^{-1}k)y = y$.
Therefore, to prove our result it is enough to prove that $gy = y$ implies that $g$ is an identity.
If $y$ is an atom then the result is immediate.
We assume the result holds for all elements $y$ whose length is at most $n$ and prove it for those
elements of length $n+1$.
Let $y$ have length $n+1$.
Let $y = uv$ where $v$ has length 1.
Then $gy = guv = (g \cdot u) (g|_{u} \cdot v)g|_{uv} = y = uv$.
Thus $g|_{y} = g|_{uv} = \ran (g)$.
Also $u = g \cdot u$ and $v = g|_{u} \cdot v$.
We have that $g|_{u}v = v g|_{uv}$.
It follows that $g|_{u}$ is an identity.
We then have $gu = u$ and so $g$ is an identity, as required.
\end{proof}

The proof of the following is immediate from the definitions and Lemma~\ref{le: greensrelations}.

\begin{lemma} Let $C$ be a skeletal left Rees category.
If $x$ and $y$ are atoms then $x \, \mathscr{J} \, y$ if and only if $x \, \mathscr{C} \, y$.
\end{lemma}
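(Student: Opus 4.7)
The plan is to observe that this biconditional is almost immediate from Lemma~\ref{le: greensrelations}(3), which identifies $\mathscr{J}$-equivalence with equality of the double ``orbit'' $GaG = GbG$. A skeletal left Rees category is a left cancellative Levi category (Proposition~\ref{prop: fred}), so it carries a length functor and the cited lemma applies, with $G$ the groupoid of invertible elements.

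Given this, I would simply unwind the definition of $\mathscr{C}$: the condition $x \,\mathscr{C}\, y$ says there exist $g, h \in G$ with $y = gxh$, which is precisely the statement $y \in GxG$. Since $\mathscr{C}$ is symmetric (from $y = gxh$ one gets $x = g^{-1}yh^{-1}$, and $G$ is a groupoid so this product makes sense using the identities $\ran(g) = \dom(x)$ and $\dom(h) = \ran(x)$), this is in turn equivalent to the set equality $GxG = GyG$. Combining with Lemma~\ref{le: greensrelations}(3) then yields $x \,\mathscr{J}\, y \Leftrightarrow x \,\mathscr{C}\, y$.

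There is no substantive obstacle; the argument is purely definition-chasing once the Green's relation lemma is in hand. It is worth remarking that neither the atom hypothesis on $x, y$ nor skeletality is actually used in the argument — the equivalence holds for any two elements of a Levi category. These hypotheses appear in the statement because this is the form of the result needed for the subsequent application, where the $\mathscr{C}$-relation on atoms (as introduced in Section~5 for the analysis of covering bimodules) must be reconciled with the Green's relation $\mathscr{J}$ in the ambient skeletal left Rees category.
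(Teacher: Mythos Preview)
Your proposal is correct and matches the paper's own approach exactly: the paper simply states that the result ``is immediate from the definitions and Lemma~\ref{le: greensrelations}'', and your argument is precisely the unwinding of this. Your additional remark that the atom and skeletality hypotheses are not used in the equivalence itself is accurate and worth noting, though $\mathscr{C}$ is formally only defined on the bimodule of atoms in Section~5.
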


The following two results will enable us to refine the way that we choose a basis for a left Rees category.
We refer the reader to Section~5 for the structure theory of left Rees categories which we use here.

\begin{lemma} Let $C$ be a left Rees category and let $X$ be a transversal of the generators of
the submaximal principal right ideals so that $C = X^{\ast}G$.
Let $x,y \in X$.
Then $x \, \mathscr{J} \, y$ if and only if  $y = g \cdot x$.
\end{lemma}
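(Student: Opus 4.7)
The plan is to translate the $\mathscr{J}$-relation into a two-sided conjugation by elements of $G$, then collapse that to a single left action by exploiting the uniqueness of the Zappa-Sz\'ep factorization $C = X^{\ast} G$ established in Section~5.

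For the forward direction I would start by applying Lemma~\ref{le: greensrelations}(3): $x \, \mathscr{J} \, y$ is equivalent to $GxG = GyG$, so there exist $g_{1}, g_{2} \in G$ with $y = g_{1} x g_{2}$. Using the self-similar action, I can rewrite $g_{1}x = (g_{1} \cdot x)(g_{1}|_{x})$, and hence
\[ y = (g_{1} \cdot x)\bigl(g_{1}|_{x}\, g_{2}\bigr), \]
which is a factorization of $y$ as an element of $X^{\ast}$ followed by an element of $G$. The delicate intermediate step is to verify that $g_{1} \cdot x$ actually lies in $X$, not merely in $X^{\ast}$: by the earlier lemma on self-similar actions, $g_{1} \cdot x$ is an atom of $C$, so it has $C$-length $1$; but since every letter of $X$ has $C$-length $1$ and $\lambda_{C}$ is additive on products, the only elements of $X^{\ast}$ of $C$-length $1$ are the single letters in $X$.

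Having placed $g_{1} \cdot x$ in $X$, I would finish by comparing the above factorization with the trivial factorization of $y$ as the element $y \in X \subseteq X^{\ast}$ followed by the identity at $\ran (y) \in G$, and invoking uniqueness of the decomposition $C = X^{\ast} G$. This forces $y = g_{1} \cdot x$, as required. The converse is an immediate check: if $y = g \cdot x$, then the Zappa-Sz\'ep identity $gx = (g \cdot x)(g|_{x}) = y \, g|_{x}$ gives $y = g x (g|_{x})^{-1} \in GxG$, whence $x \, \mathscr{J} \, y$ again by Lemma~\ref{le: greensrelations}(3).

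The main obstacle is the intermediate step showing $g_{1} \cdot x \in X$; once that is in hand, the forward direction is just the uniqueness argument and the reverse direction is a one-line calculation.
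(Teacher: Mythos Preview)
Your proof is correct and follows essentially the same route as the paper: translate $x\,\mathscr{J}\,y$ into $y = gxh$ via Lemma~\ref{le: greensrelations}(3), rewrite $gx = (g\cdot x)(g|_{x})$, and compare the two $X^{\ast}G$-factorizations of $y$ using uniqueness. One small simplification: the ``delicate intermediate step'' you flag is not actually needed, because in the Zappa--Sz\'ep decomposition $C = X^{\ast}G$ the action already lands in $X^{\ast}$ by definition, so uniqueness of the factorization immediately gives $g_{1}\cdot x = y$ as elements of $X^{\ast}$, and since $y \in X$ you get $g_{1}\cdot x \in X$ for free rather than having to argue it first via lengths.
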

\begin{proof} Suppose that $y = g \cdot x$.
Then
$CyC = Cg \cdot xC = C(g\cdot x)g|_{x}C = CgxC = CxC$. 
Thus  $x \, \mathscr{J} \, y$.
Conversely, suppose that $x \, \mathscr{J} \, y$.
Then $y = gxh$ for some $g,h \in G$.
Thus $y = (g \cdot x)g|_{x}h$. 
But by the uniqueness of the factorization, we must have that $y = g \cdot x$.
\end{proof}

\begin{lemma} Let $C$ be a skeletal left Rees category and let $x$ be an atom where $e \stackrel{x}{\rightarrow} f$.
Choose a coset decomposition $G_{e} = \bigcup_{i \in I} g_{i} (G_{e})^{+}_{x}$. 
The set $\{g_{i}x \colon i \in I\}$ consists of pairwise $\mathscr{R}$-inequivalent elements
and every atom $\mathscr{J}$-related to $x$ is $\mathscr{R}$-related to one of these elements.
\end{lemma}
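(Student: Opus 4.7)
The plan is to translate the $\mathscr{R}$- and $\mathscr{J}$-conditions into statements about the action of $G$ via Lemma~\ref{le: greensrelations}, and then push everything through the coset decomposition using the defining property of $(G_e)^+_x$. Recall that since we are in Section~6 of the paper, $G$ is totally disconnected, so $G = \bigsqcup_{e} G_e$; in particular, any invertible element composable on the left with something of domain $e$ must already lie in $G_e$, and similarly on the right.

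First, I would prove pairwise $\mathscr{R}$-inequivalence of $\{g_i x : i \in I\}$. Note each $g_i \in G_e$, so $g_i x$ is defined and has codomain $f$. Suppose $g_i x \,\mathscr{R}\, g_j x$. By Lemma~\ref{le: greensrelations}(2) this means $(g_i x)G = (g_j x)G$, hence $g_i x = g_j x h$ for some $h \in G$; by total disconnectedness $h \in G_f$. Rearranging, $(g_j^{-1} g_i)x = xh$, so by definition $g_j^{-1} g_i \in (G_e)^+_x$, forcing $g_i(G_e)^+_x = g_j(G_e)^+_x$ and hence $i = j$.

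Second, I would handle an arbitrary atom $y$ with $y \,\mathscr{J}\, x$. By Lemma~\ref{le: greensrelations}(3), $GyG = GxG$, so $y = g x h$ for some $g, h \in G$; total disconnectedness forces $g \in G_e$ and $h \in G_f$. Using the chosen coset decomposition, write $g = g_i k$ with $k \in (G_e)^+_x$. Then, by definition of $\phi_x$, we have $kx = x\phi_x(k)$, so
$$y = g_i (kx) h = g_i x\, \phi_x(k) h = (g_i x) h',$$
where $h' := \phi_x(k) h \in G_f$ is invertible. Thus $yG = (g_i x)G$, and by Lemma~\ref{le: greensrelations}(2) $y \,\mathscr{R}\, g_i x$, as required.

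The argument is essentially direct manipulation once the Green's-relation characterizations are in place; the only mild subtlety is making sure, via total disconnectedness, that the conjugating invertibles actually lie in the correct local groups $G_e$ and $G_f$ so that the coset machinery applies. No other obstacle is expected.
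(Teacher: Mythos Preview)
Your proposal is correct and follows essentially the same route as the paper's proof: both translate $\mathscr{R}$ and $\mathscr{J}$ via Lemma~\ref{le: greensrelations}, rearrange to land an element in $(G_e)^+_x$, and then use the coset decomposition (with $\phi_x$ in the second part) to reach the conclusion. Your explicit invocation of skeletality/total disconnectedness to place the invertibles in $G_e$ and $G_f$ is a welcome clarification that the paper leaves implicit.
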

\begin{proof} Suppose that $g_{i}x \, \mathscr{R} \, g_{j}x$.
Then $g_{i}x = g_{j}xg$ for some $g \in G_{f}$.
Then $x = (g_{i}^{-1}g_{j})xg$.
But $g_{i}^{-1}g_{j} \in (G_{e})^{+}_{x}$.
By the definition of coset representatives, we have that $g_{i} = g_{j}$, as required.
Let $y \, \mathscr{J} \, x$ where $y$ is any atom.
Then $y = gxh$ for some $g,h \in G$.
Write $g = g_{i}a$ where $a \in (G_{e})^{+}_{x}$.
Then $y = g_{i}axh = g_{i}x \phi_{x}(a)h$.
Hence $y \, \mathscr{R} \, g_{i}x$.
\end{proof}

We now define a special type of basis for a left Rees category $C$.
For each hom-set $eCf$, choose a transversal $Y$ of the $\mathscr{J}$-classes of the atoms in $eCf$.
For each $x \in Y$, choose a coset decomposition $G_{e} = \bigcup_{i \in I} g_{i} (G_{e})^{+}_{x}$. 
Denote by $T_{x}^{+}$ the transversal $\{g_{i} \colon i \in I\}$.
We shall assume that the appropriate identities are always elements of these transversals.
For each such $x$, we have a set of atoms $\{kx \colon k \in T_{x}^{+} \}$
and the totality of those atoms as $x$ varies over $Y$ then provides a basis for $C$. 
We call a basis constructed in this way a {\em co-ordinatization}.
Observe that it contains two components:
a transversal of the $\mathscr{J}$-classes of the set of atoms of $C$,
we call this an {\em atomic transversal},
and for each atom $x$ in that transversal a set of coset representatives $T_{x}^{+}$.

\begin{theorem}[Skeletal left Rees categories and their diagrams]\label{the: second_theorem} \mbox{}
\begin{enumerate}

\item Let $D$ be a diagram of partial homomorphisms.
Then we may construct a skeletal left Rees category $C$ equipped with an atomic transversal such that the diagram of
partial homomorphisms constructed from the bimodule of atoms of $C$ relative to that transversal is equal to $D$. 
If $D$ is a diagram of partial isomorphisms then $C$ is a Rees category.

\item Let $C$ be a skeletal left Rees category equipped with an atomic transversal.
Then we may construct a diagram of partial homomorphisms $D$ using that transversal such that 
the tensor category of the covering bimodule associated with $D$ is isomorphic to $C$.
If $C$ is a Rees category then $D$ is a diagram of partial isomorphisms.

\end{enumerate}
\end{theorem}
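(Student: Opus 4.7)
The plan is that both halves of this theorem are essentially a bookkeeping assembly of the machinery already constructed in Sections~3--6. The round-trips $D \leadsto \mathsf{B}(D) \leadsto \mathsf{T}(\mathsf{B}(D))$ and $C \leadsto (G,X,G) \leadsto D$ have each already been shown to be the identity (or natural isomorphism) at the appropriate stage; the theorem simply records that when composed they match.

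For part~(1), the plan is: given the diagram $D$, first form the covering bimodule $\mathsf{B}(D) = (G,X,G)$ via Lemma~\ref{le: diagram_to_bimodule}, where $G$ is the disjoint union of the vertex groups of $D$ and is therefore totally disconnected (as standing-hypothesis in Section~5 requires). Then set $C := \mathsf{T}(\mathsf{B}(D))$. By Theorem~\ref{the: first_theorem}(1), $C$ is a Levi category whose bimodule of atoms is $\mathsf{B}(D)$; by Theorem~\ref{the: first_theorem}(2), right-freeness of $\mathsf{B}(D)$ makes $C$ left cancellative; and Proposition~\ref{prop: fred} then promotes it to a left Rees category. Because the groupoid of invertible elements of $C$ is $G$, which is totally disconnected, every isomorphism in $C$ lives in a local monoid, so $C$ is skeletal. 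The atomic transversal we take is $\{\,[e,x,f] : e\xrightarrow{x} f \text{ in } D\,\}$; with this choice the diagram extracted by Lemma~\ref{le: bimodule_to_diagram} is equal to $D$, exactly as observed in the last line of the proof of Lemma~\ref{le: diagram_to_bimodule}. Finally, if $D$ is a diagram of partial isomorphisms then Lemma~\ref{le: bimodule_to_diagram}(3) makes $\mathsf{B}(D)$ bifree, so Theorem~\ref{the: first_theorem}(2) also yields right cancellativity of $C$; hence $C$ is a Rees category.

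For part~(2), the plan is dual: given a skeletal left Rees category $C$ with an atomic transversal $Y$, apply Lemma~\ref{le: category_to_bimodule} to obtain the covering bimodule $(G,X,G)$ of atoms of $C$. By the lemma immediately following Lemma~\ref{le: right_cancellative_category}, on the set of atoms the relations $\mathscr{J}$ and $\mathscr{C}$ coincide, so $Y$ is a transversal of the $\mathscr{C}$-classes of $X$; feed this transversal into Lemma~\ref{le: bimodule_to_diagram} to obtain a diagram of partial homomorphisms $D$. Lemma~\ref{le: fred} then gives $X \cong \mathsf{B}(D)$ as bimodules, and Theorem~\ref{the: first_theorem}(4) identifies $C$ with the tensor category of its own bimodule of atoms, so $C \cong \mathsf{T}(X) \cong \mathsf{T}(\mathsf{B}(D))$, as required. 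If in addition $C$ is a Rees category then Lemma~\ref{le: right_cancellative_category} forces every $\phi_{x}$ to be an isomorphism, so $D$ is a diagram of partial isomorphisms.

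The only point that is not entirely mechanical is the on-the-nose (rather than merely up-to-conjugacy) recovery of $D$ in part~(1); this rests on the fact that, with the canonical transversal $[e,x,f]$, the subgroups $(G_{e})_{[e,x,f]}^{+}$ and the map $\phi_{[e,x,f]}$ produced by Lemma~\ref{le: bimodule_to_diagram} are defined by literally the same equations as $(G_{e})_{x}^{+}$ and $\phi_{x}$ in $D$. Everything else amounts to checking that the skeletal hypothesis and the tensor-category construction interact correctly with a totally disconnected acting groupoid, which follows from the Section~5 standing hypothesis and Theorem~\ref{the: first_theorem}.
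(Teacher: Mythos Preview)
Your proposal is correct and follows essentially the same route as the paper: for (1) the chain $D \mapsto \mathsf{B}(D) \mapsto \mathsf{T}(\mathsf{B}(D))$ via Lemma~\ref{le: diagram_to_bimodule}, Theorem~\ref{the: first_theorem} and Proposition~\ref{prop: fred}, with the canonical transversal $\{[e,x,f]\}$ recovering $D$ on the nose; for (2) the chain $C \mapsto (G,X,G) \mapsto D$ via Lemma~\ref{le: category_to_bimodule} and Lemma~\ref{le: bimodule_to_diagram}, then Lemma~\ref{le: fred} and Theorem~\ref{the: first_theorem}(4) to close the loop. If anything, you are more explicit than the paper (which just says ``$B$ is essentially $\mathsf{B}(D)$'' where you invoke Lemma~\ref{le: fred}, and appeals to ``results proved earlier'' for the Rees/partial-isomorphism equivalence where you cite Lemma~\ref{le: right_cancellative_category}). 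One small slip: ``Lemma~\ref{le: diagram_to_bimodule}(3)'' has no part~(3); you mean Lemma~\ref{le: bimodule_to_diagram}(3), applied after noting that the diagram extracted from $\mathsf{B}(D)$ is $D$ itself.
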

\begin{proof} (1). Let $D$ be a diagram of partial homomorphisms.
Then by Lemma~\ref{le: diagram_to_bimodule}, we may construct a covering bimodule $\mathsf{B}(D)$.
This bimodule has a transversal with respect to which the associated diagram of partial homomorphisms is $D$.
By Theorem~\ref{the: first_theorem}, we may construct a left cancellative, skeletal equidivisible category
equipped with a length functor $\mathsf{T}(\mathsf{B}(D))$.
By Proposition~\ref{prop: fred} this is a skeletal left Rees category whose associated bimodule is $\mathsf{B}(D)$.
It follows by our results proved earlier that if we start with a diagram of partial isomorphisms we obtain a Rees category.

(2). Let $C$ be a skeletal left Rees category equipped with an atomic transversal.
Then by  Proposition~\ref{prop: fred} and Lemma~\ref{le: category_to_bimodule},
we may construct a covering bimodule $B$ from the set of atoms acted on by the groupoid of invertible elements.
From such a bimodule and an atomic transversal, we may construct a diagram $D$ of homomorphisms by Lemma~\ref{le: bimodule_to_diagram}.
But then by part (3) of Theorem~\ref{the: first_theorem}, 
the category $C$ is isomorphic to the tensor category $\mathsf{T}(\mathsf{B}(D))$ where $B$ is essentially $\mathsf{B}(D)$.
It follows by our results proved earlier that if we start with a Rees category we obtain a diagram of partial isomorphisms.
\end{proof}

\begin{remark}{\em We have now seen that diagrams of partial homomorphisms and skeletal left Rees categories
are two ways of viewing the same structure, the path from diagram to category taking in both the construction of bimodules
from diagrams and then tensor categories from bimodules. 
We shall now describe a {\em direct} construction of left Rees categories from diagrams 
of partial homomorphisms.}
\end{remark}

Let $D$ be a diagram of partial homomorphisms.
We shall define a category $\langle D \rangle$ by means of a presentation constructed from $D$.
We first construct a new directed graph $D'$ from $D$.
This contains the directed graph $D$ but at each vertex we adjoin additional loops
labelled by the elements of $G_{e}$.
We construct the free category $(D')^{\ast}$.
We denote elements of this category thus $x_{1} \cdot \ldots \cdot x_{m}$ where the $x_{i}$ are edges that match.
We now factor out by two kinds of relations:
those of the form $g \cdot h = gh$ where $g,h \in G_{e}$;
and those of the form $g \cdot x = x \cdot \phi_{x}(g)$ where $g \in (G_{e})_{x}^{+}$.
The resulting category is denoted by $\langle D \rangle$.

\begin{theorem}[Presentation theorem]\label{the: third_theorem} Let $D$ be a diagram of partial homomorphisms.
Then the category $\langle D \rangle$ is a skeletal left Rees category
isomorphic to the category obtained from $D$ by constructing the tensor category of the
covering bimodule associated with $D$.
The category $\langle D \rangle$ is a skeletal Rees category if and only if $D$ is a diagram of partial isomorphisms.
\end{theorem}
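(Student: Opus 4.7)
The plan is to construct mutually inverse functors between $\langle D \rangle$ and the tensor category $\mathsf{T}(\mathsf{B}(D))$, then deduce the Rees property from Lemma~\ref{le: right_cancellative_category}.

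First I would define a functor $\Phi \colon \langle D \rangle \to \mathsf{T}(\mathsf{B}(D))$ from its presentation by specifying it on generators: send each adjoined loop $g \in G_{e}$ to the element $g$ of the groupoid of invertible elements sitting inside $\mathsf{T}(\mathsf{B}(D))$, and send each edge $e \stackrel{x}{\rightarrow} f$ of $D$ to the equivalence class $[e,x,f] \in \mathsf{B}(D) \subseteq \mathsf{T}(\mathsf{B}(D))$. This determines a functor on the free category $(D')^{\ast}$, and the work is to verify the two families of relations: the equality $g \cdot h = gh$ holds because $G$ embeds in $\mathsf{T}(\mathsf{B}(D))$ as a groupoid of invertible elements; and $g \cdot x = x \cdot \phi_{x}(g)$ for $g \in (G_{e})_{x}^{+}$ reduces to the identity $[g,x,f] = [e,x,\phi_{x}(g)]$ in $\mathsf{B}(D)$, which is immediate from the definition of the equivalence $\equiv$.

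Next I would build an inverse using part~(3) of Theorem~\ref{the: first_theorem}. Regard $\langle D \rangle$ as a $(G,G)$-bimodule in the obvious way (the loops generate groups inside $\langle D \rangle$ by the first family of relations, so $G$ embeds into its groupoid of invertible elements). Define $\theta \colon \mathsf{B}(D) \to \langle D \rangle$ by $\theta([g,x,h]) = g \cdot x \cdot h$; well-definedness on $\equiv$-classes follows by absorbing the conjugating factor $g_{2}^{-1}g_{1}$ via the relation $g \cdot x = x \cdot \phi_{x}(g)$. A routine check shows $\theta$ is a bimodule morphism, so Theorem~\ref{the: first_theorem}(3) yields a unique functor $\Theta \colon \mathsf{T}(\mathsf{B}(D)) \to \langle D \rangle$ extending $\theta$. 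The two compositions $\Phi\Theta$ and $\Theta\Phi$ agree with the identity on generators (the loops $g$ and the atoms $x$ in each case), hence by the universal properties of the presentation and of the tensor category they are identities. Combined with Theorem~\ref{the: second_theorem}(1), this gives the isomorphism $\langle D \rangle \cong \mathsf{T}(\mathsf{B}(D))$ and shows $\langle D \rangle$ is a skeletal left Rees category.

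For the final sentence, observe that the homomorphisms $\phi_{x}$ attached to $\langle D \rangle$ by the recipe of Section~6 are by construction precisely the homomorphisms of the diagram $D$. So Lemma~\ref{le: right_cancellative_category} says that $\langle D \rangle$ is right cancellative, and hence a Rees category, if and only if every $\phi_{x}$ is injective; that is, exactly when $D$ is a diagram of partial isomorphisms.

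I expect the main obstacle to be the foundational bookkeeping for $\Phi$: one has to justify that the adjoined loops really do form a groupoid isomorphic to $G$ inside $\langle D \rangle$ (rather than collapsing), and then match the relation $g \cdot x = x \cdot \phi_{x}(g)$ against the explicit form of $\equiv$ in $\mathsf{B}(D)$. Once the embedding of $G$ is confirmed and the relations are translated, the rest of the argument is a formal consequence of the universal properties already proved.
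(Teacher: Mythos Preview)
Your approach is correct and genuinely different from the paper's. The paper argues in one direction only: it builds the functor you call $\Phi$ from $\langle D\rangle$ to $C=\mathsf{T}(\mathsf{B}(D))$, and then proves injectivity by a \emph{normal form} argument. Concretely, it chooses a co-ordinatization of $C$ (coset representatives $T_{x}^{+}$ for each edge $x$), so that every element of $C$ has a unique expression as a product of basis atoms followed by an invertible element; it then rewrites every word in $\langle D\rangle$ into that same normal form using the two families of relations, and observes that distinct normal forms map to distinct elements of $C$. Your proof instead constructs a putative inverse $\Theta$ via the universal property of the tensor category and checks $\Theta\Phi$ and $\Phi\Theta$ on generators. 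This is cleaner and avoids the rewriting computation; the paper's route, on the other hand, yields explicit normal forms in $\langle D\rangle$ as a by-product, which is what feeds into the subsequent embedding theorem.

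Two small points of care. First, your worry about $G$ collapsing in $\langle D\rangle$ is disposed of for free by $\Phi$: since $\Phi$ restricted to the loops is the identity on $G\subseteq\mathsf{T}(\mathsf{B}(D))$, the natural map $G\to\langle D\rangle$ is split injective. Second, Theorem~\ref{the: first_theorem}(3) is stated for a target category whose groupoid of invertible elements \emph{equals} $G$, whereas at the moment you invoke it for $\langle D\rangle$ you only know $G$ embeds as a subgroupoid. The proof of Theorem~\ref{the: first_theorem}(3) does not actually use the stronger hypothesis (only that $G$ sits inside $C$ so that the bimodule structure and the assignment $g\mapsto g$ make sense), so you can either note this or simply define $\Theta$ directly by $\Theta(g)=g$ and $\Theta([g_{1},x_{1},h_{1}]\otimes\cdots\otimes[g_{n},x_{n},h_{n}])=g_{1}\cdot x_{1}\cdot h_{1}\cdots g_{n}\cdot x_{n}\cdot h_{n}$ and check well-definedness via Lemma~\ref{le: equality_of_tensors}. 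Your final paragraph on the Rees case is exactly right.
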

\begin{proof} Denote by $C$ the skeletal left Rees category constructed from the diagram 
of partial homomorphisms $D$ according to part (1) of Theorem~\ref{the: second_theorem}.
This category satisfies all the defining relations of the category $\langle D \rangle$ and so
$C$ is a functorial image of $\langle D \rangle$.

It remains therefore only to show that this functor is injective.
We work first in the category $C$.
In the category $C$ we choose an atomic transversal whose elements can be identified with the edges of the diagram $D$.
For each such atom $x$ choose a coset decomposition $G_{e} = \bigcup_{i \in I} g_{i} (G_{e})^{+}_{x}$. 
This leads to a co-ordinatization for $C$ as described earlier in this section.
We now appeal to the structure theory of Section~5, and deduce that every element of $x$ can be written
as a unique product of atoms in this co-ordinatization followed by an invertible element.
Call this a normal form.
We now work in the category $\langle D \rangle$.
Using the same coset decomposition as above, we may show that every element in $\langle D \rangle$
is equivalent in the presentation, using our two types of relations, to an element in normal forms.
However, different normal forms correspond to different elements of $C$ and so these normal forms are unique
and we have established our isomorphism.
\end{proof}

We may paraphrase the above theorem as saying that 
{\em every skeletal left Rees category may be presented by a diagram of partial homomorphisms.}

The following theorem was originally suggested by \cite{V1,V2} and \cite{Cohn}
but the proof is a straightforward generalization of Higgins's main result \cite{Higgins}
and at the same time this shows how our approach is related to his.

\begin{theorem} 
Every skeletal Rees category may be embedded in its universal groupoid.
\end{theorem}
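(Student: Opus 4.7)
The plan is to reduce the theorem, via the presentation theorem, to a normal form statement for the universal groupoid of a diagram of partial isomorphisms, and then invoke a Britton/Bass-Serre style normal form argument in the spirit of Higgins. By Theorem~\ref{the: third_theorem}, a skeletal Rees category $C$ is isomorphic to $\langle D \rangle$ for a diagram of partial isomorphisms $D$, so it suffices to embed $\langle D \rangle$ into its universal groupoid. Let $U = U(\langle D \rangle)$. I would first give $U$ an explicit presentation: take the graph $D'$ obtained from $D$ by adjoining, for each vertex $e$, loops labelled by the elements of $G_{e}$, and then for each edge $x$ of $D$ adjoin a formal inverse edge $x^{-1}$; then impose the relations of $\langle D \rangle$ together with $x \cdot x^{-1} = \dom (x)$ and $x^{-1} \cdot x = \ran (x)$. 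The universal property, together with Theorem~\ref{the: third_theorem}, identifies the resulting groupoid with $U$.

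Next I would set up normal forms. Because each $\phi_{x}$ is an isomorphism (this is where right cancellativity is used, via Lemma~\ref{le: right_cancellative_category}), every word in $U$ can be pushed into the shape $g_{0} \cdot y_{1}^{\epsilon_{1}} \cdot g_{1} \cdot y_{2}^{\epsilon_{2}} \cdots y_{n}^{\epsilon_{n}} \cdot g_{n}$, where each $y_{i}$ is an atom of $\langle D \rangle$, each $\epsilon_{i} = \pm 1$, and each $g_{i}$ is an element of the appropriate vertex group. Call such a word \emph{reduced} if it contains no subword of the form $y \cdot g \cdot y^{-1}$ with $g \in (G)^{-}_{y}$ nor $y^{-1} \cdot g \cdot y$ with $g \in (G)^{+}_{y}$; using the relations $g \cdot y = y \cdot \phi_{y}(g)$ and their consequences, every word can be rewritten to a reduced one in finitely many steps.

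The heart of the proof is then the uniqueness of reduced forms: two reduced words representing the same arrow in $U$ must have the same length, the same edge sequence with the same signs, and their group pieces must agree modulo the evident coset ambiguity. This is the standard Bass-Serre normal form theorem; I would prove it in the groupoid setting by choosing left coset transversals $T^{\pm}_{y}$ for $(G)^{\pm}_{y}$ in the relevant vertex groups, defining an action of the free groupoid on $D \cup D^{-1}$ on the set of "canonically chosen" reduced words, and checking that the defining relations of $U$ are respected, so that the action descends to $U$ and is faithful on reduced words. This is exactly the generalization of Higgins's construction in \cite{Higgins} to the partial/groupoid setting, which goes through unchanged because the vertex groups and edge subgroups behave locally just as in his case.

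Finally, embedding follows: every element of $\langle D \rangle$ is, by the co-ordinatization preceding Theorem~\ref{the: second_theorem}, uniquely of the form $y_{1} \cdots y_{n} g$ with the $y_{i}$ atoms from the chosen basis and $g$ invertible, and this expression is already reduced as a word in $U$. If $\iota(a) = \iota(b)$ in $U$ for $a,b \in \langle D \rangle$, the normal form uniqueness forces the two positive reduced words to coincide, hence $a = b$. The main obstacle is the normal form uniqueness step; the other ingredients (the presentation of $U$, the existence of reduced forms, and the transfer to the injectivity conclusion) are routine given the machinery built up in the preceding sections.
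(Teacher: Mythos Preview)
Your proposal is correct and follows essentially the same route as the paper: reduce via the presentation theorem to $\langle D\rangle$ for a diagram of partial isomorphisms, choose coset transversals $T_{x}^{+}$ and $T_{x}^{-}$ on both sides of each edge, and invoke Higgins's normal form argument for the universal groupoid to conclude that positive words remain distinct. The only difference is one of exposition: the paper simply cites \cite{Higgins} for the normal form step, whereas you sketch the van der Waerden style action argument explicitly.
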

\begin{proof} Let $C$ be a skeletal Rees category.
By Theorem~\ref{the: third_theorem}, we may assume that $C = \langle D \rangle$ where $D$ is a diagram of partial isomorphisms.
Denote by $\mathcal{G}$ the universal groupoid of $C$.
Our goal is to obtain a normal form for the elements of $\mathcal{G}$.
To that end, let $e \stackrel{x}{\rightarrow} f$.
Choose a coset decomposition $G_{e} = \bigcup_{i \in I} g_{i} (G_{e})^{+}_{x}$ to obtain the transversal $T_{x}^{+}$ as before.
However, now that we want to work in a groupoid, 
we shall also need a coset decomposition
$G_{f} = \bigcup_{j \in J} h_{j} (G_{f})^{-}_{x}$ to obtain the transversal $T_{x}^{-}$.
In both transversals, we assume that the identity elements of their respective groups have been choosen.
We may now follow the proof of the theorem in Section~3 of \cite{Higgins}
by defining suitable normal forms since finiteness plays no role in Higgins's proof.
This shows that $C$ is in fact embedded in $\mathcal{G}$.
\end{proof}

\begin{example} {\em Free monoids on $n$ generators are Rees monoids.
The group constructed according to the above theorem is the free group on $n$ generators.}
\end{example}

\begin{remark}
{\em Alternative ways of proving the above theorem are suggested by the results of Cohn \cite{Cohn} and von Karger \cite{V1,V2} though we do not pursue these here.}
\end{remark}

\begin{remark}
{\em Higgins uses the relations  $g \cdot x = x \cdot \phi_{x}(g)$ in two directions to construct the fundamental groupoid
of a diagram of partial isomorphisms.
We, on the other hand, use these relations in one direction only to construct a Rees category.
It is then evident that the Rees category sits inside the fundamental groupoid.
What is perhaps surprising is that these cancellative categories can be abstractly characterized.}  
\end{remark}

\section{A categorical approach to Bass-Serre theory}

We shall now explain the connection between the theory we have developed and the theory of graphs of groups.
Our references for this theory are \cite{Serre} and \cite{W}.
We start with an observation. 
A graph of groups equipped with a given orientation is
essentially the same thing as a diagram of partial isomorphisms where the directed graph underlying it is finite
and weakly connected in the sense that as a graph it is connected.
Essentially, graphs of groups represent partial isomorphisms by means of relations and so are unoriented.
We shall call the diagrams of partial isomorphisms that arise from graphs of groups equipped with an orientation {\em Serre diagrams of partial isomorphisms}.

Let $C$ be a category.
A {\em zig-zag} joining the identity $e$ to the identity $f$ is determined by a sequence of identities
$e = e_{1}, \ldots, e_{n} = f$ such that for each consecutive pair of identities $e_{i}$ and $e_{i+1}$ we have that
either $e_{i}Ce_{i+1}$ or  $e_{i+1}Ce_{i}$ is non-empty. 
We say that a category $C$ is {\em connected} if any two identities $e,f \in C_{o}$ are joined by a zig-zag.

A skeletal Rees category $C$ is called a {\em Serre category} if it satisfies the following conditions.
\begin{description}

\item[{\rm (S1)}] The number of identities in $C$ is finite and nonzero.

\item[{\rm (S2)}] In each hom-set, the number of $\mathscr{J}$-classes of atoms is finite.

\item[{\rm (S3)}] $C$ is connected.

\end{description}

The following theorem is now immediate from the above definitions and what we proved in the previous section.

\begin{theorem}[Graphs of groups as categories]\label{the: fourth_theorem} \mbox{}

\begin{enumerate}

\item There is a correspondence between graphs of groups with a given orientation and Serre diagrams of partial isomorphisms.

\item There is a correspondence between Serre diagrams of partial isomorphisms and Serre categories.

\item The fundamental groupoid of a graph of groups with a given orientation is isomorphic to the universal groupoid
of the Serre category constructed from the diagram of partial isomorphisms associated with the oriented graph of groups.

\item The universal groupoid of a Serre category is connected.

\end{enumerate}
\end{theorem}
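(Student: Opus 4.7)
The plan is to dispatch each part by unpacking definitions and invoking previously-established machinery; the only substantive content sits in part~(3).

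For~(1), I would unwind the standard data of an oriented graph of groups (vertex groups $G_{v}$, edge groups $G_{x}$, and injective boundary monomorphisms $\alpha_{x} \colon G_{x} \rightarrow G_{\dom(x)}$, $\omega_{x} \colon G_{x} \rightarrow G_{\ran(x)}$) and set $(G_{\dom(x)})_{x}^{+} = \alpha_{x}(G_{x})$, $(G_{\ran(x)})_{x}^{-} = \omega_{x}(G_{x})$, and $\phi_{x} = \omega_{x} \circ \alpha_{x}^{-1}$, which is a partial isomorphism precisely because $\alpha_{x}$ and $\omega_{x}$ are both injective. Going the other way, from a Serre diagram one recovers the edge group as $(G_{\dom(x)})_{x}^{+}$, with $\alpha_{x}$ its inclusion and $\omega_{x} = \phi_{x}$. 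Checking that these two constructions are mutually inverse, and that finiteness and weak connectedness match on the nose, is routine.

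Part~(2) would follow from Theorem~\ref{the: second_theorem} restricted to diagrams of partial isomorphisms, paired with Lemma~\ref{le: right_cancellative_category} to upgrade ``skeletal left Rees'' to ``skeletal Rees''. I would then verify that the three Serre axioms translate verbatim into graph-theoretic properties of the associated diagram~$D$: (S1) matches finiteness of the vertex set of $D$; (S2) matches finiteness of the number of edges between each ordered pair of vertices, because by Lemma~\ref{le: bimodule_to_diagram} and the identification of $\mathscr{J}$-classes of atoms with $\mathscr{C}$-classes, edges of $D$ correspond bijectively to $\mathscr{J}$-classes of atoms in the relevant hom-set; and (S3) is weak connectedness of $D$, since a zig-zag in $C$ between two identities is the same thing as an undirected path in the underlying graph of $D$.

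Part~(3) is the crux. My plan is to present $C$ as $\langle D \rangle$ using Theorem~\ref{the: third_theorem}, and then observe that the universal groupoid $U(C)$ has a presentation with the same generators as $\langle D \rangle$ (one copy of each vertex group $G_{e}$, together with one generator $x$ per edge of $D$) and the same relations ($g \cdot h = gh$ inside $G_{e}$, and $g \cdot x = x \cdot \phi_{x}(g)$ for $g \in (G_{e})_{x}^{+}$), supplemented only by formal inverses $x^{-1}$ satisfying $xx^{-1} = \dom(x)$ and $x^{-1}x = \ran(x)$. Using the dictionary of part~(1), the relation $g \cdot x = x \cdot \phi_{x}(g)$ rewrites as $\alpha_{x}(h) \cdot x = x \cdot \omega_{x}(h)$ for $h \in G_{x}$, which is precisely the defining relation in the standard presentation of the fundamental groupoid of a graph of groups (as given, for instance, by Higgins in~\cite{Higgins}). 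Matching generators and relations yields the required isomorphism of groupoids. The main obstacle I anticipate here is purely bookkeeping: making sure the orientation convention on $D$ and the direction in which the fundamental-groupoid relations are written agree, and that the universal property of $U(C)$ lines up with the universal property of the fundamental groupoid.

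Finally~(4) is an immediate corollary of (S3): any two identities $e, f$ of $C$ are joined by a zig-zag in $C$, and replacing each backward-traversed arrow by its formal inverse in $U(C)$ produces an arrow of $U(C)$ from $e$ to $f$, so $U(C)$ is connected as a groupoid.
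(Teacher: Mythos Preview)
Your proposal is correct and follows essentially the same approach as the paper, which declares the theorem ``immediate from the above definitions and what we proved in the previous section'' and gives no further proof. You have simply filled in the details the paper leaves implicit: the dictionary of part~(1), the invocation of Theorem~\ref{the: second_theorem} and Lemma~\ref{le: right_cancellative_category} for part~(2), the matching of presentations via Theorem~\ref{the: third_theorem} and Higgins's presentation of the fundamental groupoid for part~(3), and the zig-zag argument for part~(4) --- all of which are exactly what the paper has in mind.
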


The following two examples are the basic building blocks of Bass-Serre theory.

\begin{examples}{\em  \mbox{}
\begin{enumerate}

\item {\bf HNN extensions}. These are constructed from Rees {\em monoids}. If we make the additional assumption that the monoid 
has the property that any two atoms are $\mathscr{J}$-related, then the universal groups are precisely HNN extensions with one stable letter.
This case was the subject of our paper \cite{LW} and motivated the work of the current paper.

\item {\bf Amalgamated free products.}  The building blocks of these are $(G,H)$-bimodules $X$ where $G$ and $H$ are both groups.
We say that such a bimodule is {\em irreducible} if there exists $x \in X$ such that $GxH = X$.
There is a bijective correspondence between conjugacy classes of partial isomorphisms from $G$ to $H$ 
and isomorphism classes of irreducible, bifree $(G,H)$-bisets.
Consider now any irreducible, bifree biset $(G,X,H)$.
Choose and fix $x \in X$.
Let $A = G_{x}^{+}$, $B = H_{x}^{-}$ and $\theta = \phi_{x}$ be the associated partial isomorphism.
We may regard $(G,X,H)$ as a cancellative category with two identities in the following way.
We let the identity of $G$, $1_{G}$ say, be one of the identities and the identity of $H$, $1_{H}$ say, the other.
Thus we take the disjoint union $G \cup X \cup H$.
The products in $G$ and $H$ are the group products.
The product $gx$ is the action and the product $xh$ is the action.
We denote by $C$ the above biset regarded as a category in this way.
Then $C$ is a Rees category and is in fact a Serre category.
In this case, the tensor product cosntruction is essentially degenerate.
The universal groupoid of $C$ is connected and any vertex group is isomorphic to  $G \ast_{\theta} H$
the amalgamated free product of $G$ and $H$ via the identifying partial isomorphism $\theta$.

\end{enumerate}
}
\end{examples}

\subsection{Concluding remarks}

Andrew Duncan (Newcastle, UK) has pointed out that there are some interesting parallels between our theory and Stallings theory of pregroups
as described in \cite{Hoare}.
We do not know, at this point, whether we can derive that theory from ours.

The Serre tree of a graph of groups may be constructed from the way the associated Rees category is embedded in its universal group \cite{W}.
This is strongly reminiscent of the categorical interpretation of the proof of McAlister's $P$-theorem described in \cite{JamesL}.
In fact, using \cite{JL2}, every Rees category gives rise to an inverse semigroup which is strongly $E^{\ast}$-unitary.
The maximum enlargement theorem, described in Chapter~8 of \cite{Lawson1998}, may then be used to construct the Serre tree.
Nick Gilbert (Heriot-Watt, UK) has a different approach to Bass-Serre theory \cite{G} which uses ordered groupoids.
We do not yet know how his approach can be reconciled with the one we have sketched above.

All cancellative equidivisible categories may be embedded in groupoids; see \cite{V1} although a direct proof using the ideas of \cite{PJ} would be useful.
Motivated by group theory, one might consider the structure of such categories equipped with other kinds of length functors.


\end{document}